\newtheorem{Theorem}{Theorem}[subsection]
\newtheorem{Lemma}[Theorem]{Lemma}
\newtheorem{Prop}[Theorem]{Proposition} 
\newtheorem*{thma}{Theorem}
\theoremstyle{definition} \newtheorem*{step1}{Step 1}
\theoremstyle{definition} \newtheorem*{step2}{Step 2}
\theoremstyle{definition} \newtheorem*{stepi}{Step $i+1$}
\theoremstyle{definition} 
\theoremstyle{definition} 
\theoremstyle{definition} \newtheorem{definition}[Theorem]{Definition}
\theoremstyle{definition} \newtheorem{Remark}[Theorem]{Remark}
\theoremstyle{definition} \newtheorem*{Remark*}{Remark}
\theoremstyle{definition} \newtheorem{Question}[Theorem]{Question}
\theoremstyle{definition} \newtheorem*{Notation}{Notation}
\theoremstyle{definition} \newtheorem{Observation}[Theorem]{Observation}
\begin{document}
\title{Compactifying the relative Picard functor over degenerations of varieties}
\author{Atoshi Chowdhury}
\address{Instituto Nacional de Matem\'{a}tica Pura e Aplicada, Estrada Dona Castorina 110, 22460-320, Rio de Janeiro, RJ, Brazil}
\email{atoshi@impa.br}

\begin{abstract}
Over a family of varieties with singular special fiber, the relative Picard functor (i.e. the moduli space of line bundles) may fail to be compact.  We propose a stability condition for line bundles on reducible varieties that is aimed at compactifying it.  This stability condition generalizes the notion of `balanced multidegree' used by Caporaso in compactifying the relative Picard functor over families of curves.  Unlike the latter, it is defined `asymptotically'; an important theme of this paper is that although line bundles on higher-dimensional varieties are more complicated than those on curves, their behavior in terms of stability asymptotically approaches that of line bundles on curves.

Using this definition of stability, we prove that over a one-parameter family of varieties having smooth total space, any line bundle on the generic fiber can be extended to a unique semistable line bundle on the (possibly reducible) special fiber, provided the special fiber is not too complicated in a combinatorial sense.
\end{abstract}

\maketitle

\section{Introduction} \label{section-intro}

\subsection{Overview of problem}

This paper addresses the problem of compactifying the relative Picard functor---the functor parametrizing line bundles---over families of varieties in which some fibers may be singular.  This problem has been studied extensively for curves and somewhat sporadically for higher-dimensional varieties (see Section \ref{subsection-background} for some references).  Here we consider it for varieties of arbitrary dimension.

There are two obstacles that may prevent the relative Picard functor from being proper.  First, over a family of varieties whose special fiber is singular, the space of line bundles may not be closed: it may not be possible to extend a given line bundle on the generic fiber to a line bundle on the special fiber.  Second, the space may not be separated: a line bundle on the generic fiber may have more than one extension to the special fiber.

We study the second obstacle, nonseparatedness, in the following situation: let $\mathfrak{X} \rightarrow S$ be a one-parameter family of varieties whose total space is smooth, and whose special fiber $X$ may be reducible (with simple normal crossings singularities).  Over such a family, as explained in Section \ref{section-twist} below, the nonseparatedness of the relative Picard functor arises precisely from the reducibility of the special fiber $X$; in particular, if $X$ is reducible, then any line bundle on the generic fiber has infinitely many extensions to $X$.

To correct this, we propose a stability condition for line bundles on possibly reducible varieties (Definition \ref{def-stability}).  The upshot of our main results (Theorems \ref{thm-tree} and \ref{thm-dimd}) is the following (stated more precisely later as Theorem \ref{thm-upshot}):
\begin{thma}
Assume the dual graph of $X$ (appropriately defined) is a tree, and the canonical bundle of $X$ is nonzero in a suitable sense (for example, either ample or anti-ample).  Then any line bundle on the generic fiber of $\mathfrak{X}$ can be extended to a semistable line bundle on $X$.  Generically there is a unique such extension; in special cases there may be more than one, but never more than $2^{n-1}$, where $n$ is the number of irreducible components of $X$.
\end{thma}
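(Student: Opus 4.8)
The plan is to reduce the theorem to counting lattice points in a polytope of ``twists'' of a chosen extension, and then to use the tree hypothesis to identify that polytope with a product of unit intervals.

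\emph{Extending, enumerating, and translating stability.} Since $\mathfrak X$ is regular and $\mathfrak X_\eta$ is a dense open subscheme, $L_\eta$ extends to a line bundle $\mathcal L$ on all of $\mathfrak X$: represent $L_\eta$ by a Weil divisor, take its closure, and use regularity to see the closure is Cartier. Writing the special fibre as a divisor $X=\sum_{i=1}^n X_i$ on $\mathfrak X$, smoothness of the total space together with the simple-normal-crossings hypothesis forces each $X_i$ to occur with multiplicity one, and $\mathcal O_{\mathfrak X}(X)$ restricts trivially to $X$. Hence every extension of $L_\eta$ to a line bundle on $X$ has the form $\big(\mathcal L\otimes\mathcal O_{\mathfrak X}(\textstyle\sum_i a_iX_i)\big)\big|_X$ for some $\underline a\in\mathbb Z^n$, and two tuples give the same bundle on $X$ exactly when they differ by $(1,\dots,1)$; so the extensions form a torsor under the lattice $T:=\mathbb Z^n/\mathbb Z\cdot(1,\dots,1)\cong\mathbb Z^{n-1}$, whose elements I call \emph{twists}. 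Next I would rewrite Definition \ref{def-stability} as conditions on $T$. Each ``subcurve'' $Y\subseteq X$ (a union of components) imposes a two-sided inequality comparing the relevant asymptotic invariant of the restriction to $Y$ with a fixed rational number built from $L_\eta$ and $\omega_X$ — and it is exactly the hypothesis that $\omega_X$ is nonzero in the required sense that makes these reference values well-defined. Because tensoring by $\mathcal O_{\mathfrak X}(X_i)$ changes $\mathcal L|_{X_j}$ by $\mathcal O_{X_j}(X_i\cap X_j)$ for $j\neq i$ and by the normal bundle $\mathcal O_{X_i}(-\sum_{k\neq i}X_i\cap X_k)$ for $j=i$, each such inequality becomes affine-linear in $\underline a$, and the semistable twists are the lattice points of a rational polyhedron $P\subseteq T_{\mathbb R}=\mathbb R^{n-1}$.

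\emph{The tree makes $P$ a box.} For an edge $e$ of the dual tree, deleting $e$ partitions the vertices into $S_e\sqcup S_e^{\mathrm c}$; put $\tau_e=\sum_{i\in S_e}X_i\in T$. Since the dual graph is a tree, the $n-1$ classes $\{\tau_e\}_e$ form a $\mathbb Z$-basis of $T$ — linear independence is precisely where acyclicity is used — so I will take them as coordinates $(c_e)$ on $T$. The twist $\tau_e$ alters $\mathcal L$ only on the two components adjacent to the double locus $D_e$ of $e$, subtracting $D_e$ from one and adding it to the other, hence changes only the invariant attached to a subcurve separating the two sides of $e$, and changes it by exactly $1$ in the appropriate normalisation. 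Two further points, each proved as in Caporaso's curve case: a subcurve $Y$ with $Y$ or $Y^{\mathrm c}$ disconnected imposes nothing new beyond the connected subcurves; and a connected subcurve of a tree is $Y_e=\bigcup_{i\in S_e}X_i$ for a unique edge $e$ and meets its complement along the single double locus $D_e$, so its inequality has width equal to $1$ (here simple normal crossings, i.e.\ ``one edge $=$ one double locus,'' is what pins down the width). In the coordinates $(c_e)$ this makes $P$ literally a product of closed intervals $P=\prod_e[\lambda_e,\lambda_e+1]$.

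\emph{Counting.} A closed interval of length $1$ contains at least one integer, and exactly one unless its endpoints are integral, in which case it contains two. Hence $P\cap\mathbb Z^{n-1}\neq\varnothing$, giving the existence of a semistable extension; for generic $L_\eta$ the numbers $\lambda_e$ are non-integral and the extension is unique; and in all cases there are at most $2^{\,\#\{\text{edges}\}}=2^{\,n-1}$ semistable twists. (Equivalently one may run this as an induction on $n$, peeling off a leaf $X_v$ of the tree: the subcurve $X_v$ pins the coefficient $a_v$ to at most two values, and the remaining inequalities become the same problem for $X$ with $X_v$ deleted.) This is exactly the assertion.

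\emph{Main obstacle.} The first and third steps are essentially formal; the crux is the middle step, and specifically verifying that the \emph{asymptotic}, higher-dimensional stability condition really is affine-linear under twisting and really does decouple along the edges of the tree. For curves a ``multidegree'' is a tuple of integers and a twist adds an integer vector, so both facts are immediate; here the invariants are leading coefficients of Euler characteristics $\chi\big(L'^{\otimes m}|_Y\big)$, and one must show that the lower-order-in-$m$ corrections produced by tensoring with $\mathcal O(X_i)$ do not perturb the leading term, and that the contributions of triple and higher loci — which have no curve analogue — do not obstruct the decoupling. Making this precise is the real work; granting it, the tree combinatorics and the lattice-point count proceed as above.
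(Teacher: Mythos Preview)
Your outline is right and matches the paper: classify extensions as twists, show that the stability condition along each edge of the tree cuts out a unit interval in the corresponding edge coordinate, and show that these conditions decouple so the semistable set is a product of such intervals. The decoupling is exactly Lemma~\ref{lemma-adjacent}; the reduction to connected $Y$ with connected complement is Lemma~\ref{lemma-connected}; and your leaf-peeling induction is the algorithm of Section~\ref{section-formal}. So the tree combinatorics is fine.

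Where you go astray is in characterizing the hard step. The inequality $e_Y(L+mK_X+bY)\le 0$ is \emph{not} affine-linear in $b$ when $d\ge 2$: for each fixed $m$ it is a genuine degree-$d$ polynomial in $b$ (see Remark~\ref{remark-surfaceasymptotic}), so there is no rational polyhedron $P$ at any finite stage, and the issue is not ``lower-order-in-$m$ corrections'' or triple loci. What Section~\ref{section-arbitrary} actually proves is that after rescaling by $m^{2-2d}$ the polynomials $p_m(b)$ converge uniformly on compacta to a \emph{linear} function $A_1b+A_0$; that the hypothesis $[K_X^d][(K_X|_Y)^{d-1}Y^2]\ne 0$ is precisely what makes $A_1\ne 0$ (Lemma~\ref{lemma-degree}); that all but one root of $p_m$ escape to infinity (Lemma~\ref{lemma-diverging}); and that the surviving root approaches its limit \emph{from one side} (Lemma~\ref{lemma-monotone}), which is essential so that $\{b:p_m(b)\le 0\text{ for all large }m\}$ contains an endpoint rather than being open. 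Separately, the width of the interval being exactly $1$ does not come from ``one edge $=$ one double locus'' but from the identity $e_{\overline{X\setminus Y}}(L)=-e_Y(L+Y)$ of Proposition~\ref{prop-identity}, which relies on the normal bundles of $D=Y\cap\overline{X\setminus Y}$ in the two pieces being dual (a consequence of the smooth total space). That same identity is also what powers Lemma~\ref{lemma-connected}, so your appeal to ``as in Caporaso's curve case'' for both reductions hides a genuinely higher-dimensional ingredient that you have not stated.
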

A similar result for Type II K3 surfaces is proved in the forthcoming note \cite{k3}.

\vspace{.2in}

\subsection{Background and context} \label{subsection-background}

Over families of curves, a number of compactifications of the relative Picard scheme and of moduli spaces of higher-rank vector bundles have been constructed (e.g. \cite{odaseshadri}, \cite{caporaso}, \cite{esteves}, \cite{pandharipande}, \cite{nagarajseshadri}, \cite{schmitt}).  In the higher-dimensional setting, \cite{altmankleiman} constructs compactified Picard schemes for families of irreducible varieties; there is also work on compactifying moduli spaces of higher-rank vector bundles on single smooth surfaces rather than over degenerations (e.g. \cite{gieseker77}, \cite{maruyama}, \cite{timofeeva}, \cite{mtt}), which is thematically related to our situation in that nonseparatedness arises over vector bundles of rank at least $2$ in much the same way as it does over reducible varieties in a family.

The stability condition we define generalizes the one that appears in Caporaso's compactification (using geometric invariant theory) of the universal Picard variety over the moduli space of stable curves \cite{caporaso}.  In that compactification, the fiber over a given stable curve $X$ parametrizes semistable line bundles (in Caporaso's terminology, line bundles of \emph{balanced multidegree}) on certain semistable models (the so-called \emph{quasistable models}) of $X$.  Our stability condition is aimed at producing similar compactifications over moduli spaces of higher-dimensional varieties.

More precisely: to compactify the relative Picard functor, one must overcome the first obstacle to properness mentioned above (that a line bundle on the generic fiber of a family of varieties may have no limit over the special fiber).  There are two natural approaches to this.  One approach (used e.g. in \cite{pandharipande} and \cite{esteves}) is to construct a space whose fiber over a given singular variety $X$ parametrizes a broader class of sheaves (not necessarily locally free) on $X$.  The other (used in \cite{caporaso}) is to let the fiber over $X$ parametrize line bundles on modifications of $X$.

The results of this paper are intended to be applied in a construction using the second approach.  Specifically, suppose $\mathfrak{X} \rightarrow S$ is a one-parameter family of varieties with singular special fiber $X$ and singular total space.  Then, given a line bundle on the generic fiber of $\mathfrak{X}$ (which may have no line bundle extensions to $X$), one can always construct a desingularization $\mathfrak{X}' \rightarrow \mathfrak{X}$ and then extend the line bundle to a line bundle on the special fiber $X'$ of $\mathfrak{X}'$.  Now our results can be applied over $\mathfrak{X}'$ to count how many such extensions are semistable.

\begin{Remark}
Our ultimate hope is to use Theorem \ref{thm-upshot} to show that certain moduli spaces of semistable line bundles are proper (or at least `weakly proper', in the sense of \cite{alper}).  There is, however, considerable work to be done before this can be achieved.  The reason is indicated in the preceding paragraph: there are infinitely many distinct models $X'$ of $X$ coming from desingularizations of the original family $\mathfrak{X}$ and its base changes.  A moduli space parametrizing line bundles on the fibers of $\mathfrak{X}$ must (a priori) include line bundles on all of these models, if the first obstacle to properness (nonexistence of limits) is to be prevented.  But such a moduli space fails to be of finite type over the base.

In the curve case, it turns out that a proper moduli space can be constructed using only a finite set of modifications -- namely, the set of quasistable models -- of each $X$ (\cite{caporaso}, \cite{estevespacini}).  The generalization of this fact to higher dimension (via an appropriate generalization of the notion of quasistable curves) is the subject of ongoing joint work of the author and Eduardo Esteves.
\end{Remark}

\vspace{.2in}

\subsection{Outline of paper} \label{subsection-outline}

In Section \ref{section-twist} we examine the problem of extending line bundles over nonsingular one-parameter families of varieties and formulate the specific question that the rest of the paper will answer (Question \ref{question-twists}).

In Section \ref{section-stability} we introduce a stability condition for line bundles on possibly reducible varieties (Definition \ref{def-stability}) and prove a useful identity related to it (Proposition \ref{prop-identity}).

In Section \ref{section-formal} we establish a criterion (Theorem \ref{thm-tree}) that guarantees existence and weak uniqueness of semistable limits over varieties whose irreducible components are arranged in a combinatorially simple way.

In Section \ref{section-curve} we establish exactly when the criterion of the previous section is fulfilled by curves.  This is significantly easier than the higher-dimensional case, but illustrates the essential behavior of the latter; in particular, the stability condition in the higher-dimensional case behaves `in the limit' as it does in the curve case.

In Section \ref{section-arbitrary}, we show that the criterion of Section \ref{section-formal} is fulfilled by varieties of arbitrary dimension with ample or anti-ample canonical bundle.

\vspace{.2in}

\subsection{Acknowledgments}

I am deeply grateful to Ravi Vakil, my doctoral adviser, who introduced me to the problem considered in this paper and gave me a great deal of guidance in investigating it.  (The case $d=2$, $n=2$ of Theorem \ref{thm-upshot} appeared as the main result in my 2012 Stanford Ph.D. thesis.)

I am also grateful to Lucia Caporaso, Jesse Kass, Martin Olsson, and Brian Osserman for several helpful discussions, and to Brendan Hassett for the idea of extending this work to K3 surfaces.

\vspace{.2in}

\section{Nonseparatedness and twisting by components} \label{section-twist}

In this section we explain how nonseparatedness of the relative Picard functor arises over smooth one-parameter families of varieties.

\begin{definition} \label{def-oneparam}
A \emph{one-parameter family} of varieties of dimension $d$ is a morphism $\mathfrak{X} \rightarrow S$ with the following properties:
\begin{itemize}
\item $S$ is a smooth proper curve over a field $k$;
\item $\mathfrak{X} \rightarrow S$ is a flat proper morphism of relative dimension $d$;
\item the fibers of $\mathfrak{X} \rightarrow S$ are connected, reduced varieties (possibly reducible).
\end{itemize}

A \emph{smooth one-parameter family} is one whose total space is smooth over $k$.
\end{definition}

For the rest of this section, let $\mathfrak{X} \rightarrow S$ be a smooth one-parameter family of varieties of dimension $d$, let $s \in S$ be a closed point, and let $X$ denote the fiber of the map $\mathfrak{X} \rightarrow S$ over the point $s$.  Let $\mathfrak{X}^* = \mathfrak{X} \backslash X$, and suppose we are given a line bundle $\mathcal{L}^*$ on $\mathfrak{X}^*$.

Over the family $\mathfrak{X} \rightarrow S$, the relative Picard functor classifies equivalence classes of line bundles on $\mathfrak{X}$ modulo tensoring by pullbacks of line bundles on $S$.  (The latter operation leaves unchanged the restriction of the line bundle to each of the fibers of the family.)  Therefore, to compactify the functor, we need to understand how many different ways there are to extend $\mathcal{L}^*$ to a line bundle on $\mathfrak{X}$, up to tensoring by pullbacks of line bundles on $S$.

Since $\mathfrak{X}$ is nonsingular, there is at least one such extension; call it $\mathcal{L}$.  All other extensions can then be classified as follows.  Let $X_1,\ldots,X_n$ be the irreducible components of $X$.  For each $i$, $O_\mathfrak{X}(X_i)$ is a line bundle on $\mathfrak{X}$ that is trivial when restricted to $\mathfrak{X}^*$, so for any integers $a_1,\ldots,a_n$, the line bundle $\mathcal{L} \otimes O_\mathfrak{X}(\sum a_i X_i)$ again restricts to $\mathcal{L}^*$ on $\mathfrak{X}^*$.

Conversely, any line bundle $\mathcal{L}'$ on $\mathfrak{X}$ such that $\mathcal{L}'|_{\mathfrak{X}^*} \cong \mathcal{L}^*$ must differ from $\mathcal{L}$ by a line bundle that is trivial on $\mathfrak{X}^*$, which means the Weil divisor corresponding to the difference must be supported on $X$.  Therefore $\mathcal{L}' \cong \mathcal{L} \otimes O_\mathfrak{X}(\sum a_i X_i)$ for some integers $a_1,\ldots,a_n$.

Moreover, two extensions $\mathcal{L}$ and $\mathcal{L}'$ are equivalent modulo tensoring by pullbacks of line bundles on $S$ if and only if they differ by a multiple of $O_\mathfrak{X}(X)$ (which is the pullback of $O_S(s)$).  Since $O_\mathfrak{X}(X)|_X \cong O_X$, this occurs if and only if $\mathcal{L}|_X \cong \mathcal{L}'|_X$.

Therefore the line bundles on $\mathfrak{X}$ that restrict to $\mathcal{L}^*$ on $\mathfrak{X}^*$ are in bijection with the integer linear combinations of the irreducible components of $X$, i.e., with $\mathbf{Z}^n$.  
And the equivalence classes classified by the relative Picard functor---or, equivalently, the `limit line bundles' on $X$---are in bijection with the elements of the quotient $\mathbf{Z}^n / \langle (1,\ldots,1) \rangle$.

The crucial operation of tensoring by line bundles of the form $O_\mathfrak{X}(\sum a_i X_i)|_X$ will be called `twisting':
\begin{definition}
If $L$ and $L'$ are line bundles on $X$ such that \[ L \cong L' \otimes O_\mathfrak{X}(\sum a_i X_i)|_X \] for some $a_1,\ldots,a_n \in \mathbf{Z}$, then we say $L$ is a \emph{twist} of $L'$.
\end{definition}
\noindent (Note that the line bundle $O_\mathfrak{X}(X_i)|_X$ is independent of the family $\mathfrak{X}$; cf. Observations \ref{obs-1} and \ref{obs-2}.)

The question of how many extensions of $\mathcal{L}^*$ on $X$ (up to tensoring by pullbacks of line bundles on $S$) are semistable thus reduces to the following:
\begin{Question} \label{question-twists}
Given a line bundle $L$ on $X$, how many twists of $L$ are semistable?
\end{Question}
\noindent This is the question that the results of Sections \ref{section-formal}-\ref{section-arbitrary} answer.

\vspace{.2in}

\section{The stability condition} \label{section-stability}

In this section we propose a stability condition for line bundles on possibly reducible varieties of arbitrary dimension.  In Section \ref{subsection-definition} we give the definition in its most general form.  Then in Section \ref{subsection-smoothdeformation}, we specialize to the case of varieties that appear as fibers in smooth one-parameter families.  In Section \ref{subsection-identity} we prove the identity that underlies the usefulness of the criterion in Section \ref{section-formal}.

\vspace{.2in}

\subsection{Definition of semistability} \label{subsection-definition}

Here and throughout this paper, a \emph{variety of dimension $d$} will be a connected, reduced, but not necessarily irreducible scheme that has dimension $d$ and is proper over a field $k$, and furthermore it will be assumed to have simple normal crossings singularities (that is, each irreducible component is smooth, and irreducible components intersect transversely).  (Occasionally in remarks we will consider more general classes of varieties.)

We start by defining the main ingredient in our stability condition.

\begin{definition}  \label{def-eY}
Let $X$ be a variety of dimension $d$, and let $L$ be a line bundle on $X$.  For each union $Y \subset X$ of irreducible components of $X$, let $D = Y \cap \overline{X \backslash Y}$, and let 
\begin{equation} \label{eqn-eY}
e_Y(L) = d! \left( \chi(X,L) \cdot \frac{1}{d+1} \sum_{j=1}^{d+1} \binom{d+1}{j} [L^{d+1-j} D^{j-1} Y] - [L^d X] \cdot \chi(Y,L) \right) \text{.}
\end{equation}
\end{definition}

\begin{Notation}
The expressions $[L^{d+1-j} D^{j-1} Y]$ and $[L^d X]$ in (\ref{eqn-eY}) need some explanation.  In general, throughout this paper, quantities in square brackets represent intersection numbers.  Both within square brackets and outside them, we conflate the notation for Weil divisors, Cartier divisors, line bundles, and their first Chern classes wherever this can be done without ambiguity.  For all of these objects, multiplication denotes the intersection product, and addition denotes addition of divisors or tensor product of line bundles.

The intersection number $[L^{d+1-j} D^{j-1} Y]$ in (\ref{eqn-eY}) should be interpreted as follows: the $Y$ in the brackets serves to indicate that the intersection product is computed on $Y$.  The $L$ thus represents the restriction of the line bundle $L$ to $Y$; the $D$ represents the Weil divisor $D$ on $Y$ (which is also Cartier, thanks to the simple normal crossings assumption), or equivalently the line bundle $O_Y(D)$.

The term $[L^d X]$ in (\ref{eqn-eY}) represents the degree of $L$ on $X$.  The $X$ inside the brackets serves to indicate that the intersection $L^d$ is computed on $X$.

In Section \ref{subsection-smoothdeformation} we'll see that when $X$ is the special fiber in a smooth one-parameter family (which is the situation in which the results of Sections \ref{section-formal}-\ref{section-arbitrary} apply), these intersection products can be written and interpreted in a more unified way.
\end{Notation}

\begin{Remark}
Dividing the right-hand side of (\ref{eqn-eY}) through by $d! \chi(X,L) [L^dX]$ produces a more symmetric-looking formula and does not substantially affect the results in the rest of the paper, but we retain the version given because it turns out to be slightly more convenient for calculations.
\end{Remark}

\begin{Remark}
Definition \ref{def-eY} makes sense for somewhat more general varieties $X$: all that is really required is that the irreducible components of $X$ intersect transversely and that the Weil divisor $D = Y \cap \overline{X \backslash Y}$ be $\mathbf{Q}$-Cartier on each $Y$.  But we won't use this level of generality.
\end{Remark}

\begin{Remark} \label{remark-git}
The key property of the formula defining $e_Y(L)$ is that stated in Proposition \ref{prop-identity}; however, the formula was originally obtained without reference to such a property.  Instead, its derivation was motivated by geometric invariant theory, as follows.

In \cite{caporaso}, Caporaso constructs a compactified Picard scheme for families of curves by taking a GIT quotient of a Hilbert scheme; she identifies the GIT-semistable points of the Hilbert scheme as precisely those corresponding to `balanced' line bundles (see Definition \ref{def-balanced}).  This identification requires many technical results that cannot readily be generalized to higher-dimensional varieties.  But part of it -- the fact that if a point of the Hilbert scheme is GIT-semistable, then the associated line bundle is balanced (Proposition 3.1 in \cite{caporaso}, based on work of Gieseker \cite{gieseker82}) -- rests on arguments that, while given specifically for nodal curves, are not really curve-specific.

We derived the formula for $e_Y(L)$ heuristically by adapting these arguments (ignoring many technical issues) to varieties of arbitrary dimension with simple normal crossings singularities.
\end{Remark}

We define semistability of line bundles via the asymptotic behavior of the functions $e_Y$:

\begin{definition} \label{def-stability}
Let $X$ be a variety of dimension $d$, and let $L$ and $H$ be line bundles on $X$.  We say $L$ is \emph{$H^-$-semistable} (resp. \emph{$H^+$-semistable}) if for every sufficiently large positive integer $m$, $e_Y(L+mH) \leq 0$ (resp. $e_Y(L+mH) \geq 0$) for every union of irreducible components $Y \subset X$.
\end{definition}

\begin{Remark}
If $X$ is irreducible, every line bundle on $X$ is trivially both $H^-$- and $H^+$-semistable for any $H$.
\end{Remark}

\begin{Remark}
In Section \ref{section-arbitrary} we will work over varieties $X$ for which $\pm K_X$ is positive in a suitable sense, and count $K_X^-$- or $K_X^+$-semistable line bundles.  In other situations, such as when $X$ is a K3 surface (considered in \cite{k3}), it is useful to consider $H^-$- and $H^+$-semistable line bundles for different choices of $H$.
\end{Remark}

\begin{Remark} \label{remark-caporaso}
Definition \ref{def-stability} generalizes the stability condition given in \cite{caporaso} for line bundles over curves, which is as follows:

\begin{definition} \label{def-balanced}
Let $X$ be a nodal curve of (arithmetic) genus at least $2$.  A line bundle $L$ on $X$ is \emph{balanced} if for every union $Y \subset X$ of irreducible components of $X$,
\begin{equation} \label{eqn-balanced}
d_Y \geq \frac{d_X}{g_X-1}\left(g_Y-1+\frac{k_Y}{2}\right)-\frac{k_Y}{2} \text{,}
\end{equation}
where $d_X$ is the degree of $L$ on $X$, $d_Y$ is the degree of $L$ restricted to $Y$, $g_X$ is the genus of $X$, $g_Y$ is the genus of $Y$, and $k_Y$ is the number of points in which $Y$ meets $\overline{X \backslash Y}$.
\end{definition}

It is straightforward to verify that a line bundle $L$ on a curve $X$ of genus at least $2$ is balanced if and only if it is $K_X^-$-semistable; the inequality (\ref{eqn-balanced}) is equivalent to the inequality $e_Y(L) \leq 0$ when $d=1$.  (The number $k_Y$ is precisely the degree of the divisor $D$ in Definition \ref{def-eY}.)  Furthermore, the asymptotic element of Definition \ref{def-stability} is actually superfluous when applied to line bundles on curves, whereas it is essential when we consider varieties of dimension at least $2$; see Remarks \ref{remark-curveasymptotic} and \ref{remark-surfaceasymptotic}.
\end{Remark}

\vspace{.2in}

\subsection{Varieties with smooth deformation} \label{subsection-smoothdeformation}

When the variety $X$ is the special fiber in a smooth one-parameter family, the intersection numbers appearing in Definition \ref{def-eY} have a unified interpretation and satisfy some important relations, which we explain here.

\begin{definition} \label{def-smoothdef}
Let $X$ be a variety of dimension $d$.  We say $X$ \emph{has a smooth deformation} if there exists a smooth one-parameter family $\mathfrak{X} \rightarrow S$ of varieties of dimension $d$ in which one of the closed fibers is isomorphic to $X$.
\end{definition}

\begin{Remark}
The total space $\mathfrak{X}$ in Definition \ref{def-smoothdef} could be assumed to be merely factorial (or even $\mathbf{Q}$-factorial, if we keep track of denominators when calculating intersection numbers) without affecting the discussion or results in the rest of the paper.
\end{Remark}

If $X$ is a variety that has a smooth deformation $\mathfrak{X} \rightarrow S$, there are two important consequences:

\begin{Observation} \label{obs-1}
Let $Y \subsetneq X$ be a union of irreducible components of $X$, and let $Z = \overline{X \backslash Y}$.  Let $D = Y \cap Z$.  Then $Z$ is a Cartier divisor on $\mathfrak{X}$, and $O_Y(D) = O_\mathfrak{X}(Z)|_Y$.
\end{Observation}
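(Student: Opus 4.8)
The plan is to obtain the Cartier property from the regularity (indeed local factoriality) of the total space $\mathfrak{X}$, and then to identify the restriction of $O_\mathfrak{X}(Z)$ to $Y$ using the simple normal crossings hypothesis on $X$.

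First I would dispose of the first assertion. Since $\mathfrak{X}$ is smooth over $k$ it is regular, so all of its local rings are unique factorization domains; hence $\mathfrak{X}$ is locally factorial, and on it every effective Weil divisor is an effective Cartier divisor. Now $X$, being the fiber of the flat morphism $\mathfrak{X} \rightarrow S$ over a closed point of the curve $S$, is a closed subscheme of pure codimension one in $\mathfrak{X}$; hence $Z = \overline{X \backslash Y}$, taken with its reduced structure, is an effective Weil divisor on $\mathfrak{X}$, and by the previous sentence it is Cartier. (This is exactly where the smoothness of $\mathfrak{X}$ is used, and the same argument works if $\mathfrak{X}$ is only assumed factorial, as in the Remark above; note that $Z$ need not be Cartier on $X$ itself.) In particular $O_\mathfrak{X}(Z)$ is a genuine line bundle, so restricting it to $Y$ makes sense.

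Next I would compute that restriction. The irreducible components of $X$ are partitioned between $Y$ and $Z$, so $Y$ and $Z$ have no common component; since $Y$ is reduced, a local equation $f$ of the Cartier divisor $Z$ therefore restricts to a non-zerodivisor on $Y$. Consequently the pullback of the Cartier divisor $Z$ along the closed immersion $Y \hookrightarrow \mathfrak{X}$ is defined, it equals the scheme-theoretic intersection $Y \cap Z = Y \times_{\mathfrak{X}} Z$, and $O_\mathfrak{X}(Z)|_Y = O_Y(Y \cap Z)$. Because $Y$ and $Z$ are both closed subschemes of $X$, this fiber product is the same whether computed over $\mathfrak{X}$ or over $X$; and the simple normal crossings assumption says the components of $X$ meet transversely, so $Y \cap Z$ is reduced and coincides with the Weil divisor $D$ of Definition \ref{def-eY}. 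Hence $O_\mathfrak{X}(Z)|_Y = O_Y(D)$, as claimed.

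There is no real difficulty here; the closest thing to a main obstacle is the small amount of divisor-theoretic bookkeeping in the second step --- verifying that the restriction of the Cartier divisor $Z$ to $Y$ is defined, and that the scheme-theoretic intersection $Y \cap Z$ carries exactly the reduced structure of $D$ rather than a thickening of it. Both points are immediate: the former from the fact that $Y$ and $Z$ share no component, and the latter from transversality, which is built into the definition of simple normal crossings. So the observation reduces to combining the regularity of $\mathfrak{X}$ with the local structure of the singularities of $X$.
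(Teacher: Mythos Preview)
Your argument is correct. The paper itself offers no proof of this statement --- it is labelled an \emph{Observation} and is simply asserted --- so there is nothing to compare your proposal against line by line. That said, your reasoning matches exactly the ingredients the surrounding text relies on implicitly: the smoothness (hence local factoriality) of $\mathfrak{X}$ to promote the Weil divisor $Z$ to a Cartier divisor, and the simple normal crossings hypothesis to ensure the scheme-theoretic intersection $Y \cap Z$ is reduced and coincides with $D$. Your remark that factoriality of $\mathfrak{X}$ suffices is also in line with the paper's own Remark following Definition~\ref{def-smoothdef}.
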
 

\begin{Observation} \label{obs-2}
Since $O_\mathfrak{X}(Y+Z)|_X = O_\mathfrak{X}(X)|_X \cong O_X$, we have $O_\mathfrak{X}(Z)|_X \cong (O_\mathfrak{X}(Y)|_X)^\vee$.  Equivalently, the normal bundles of $D$ in $Y$ and in $Z$ are dual to each other.  Another interpretation of this fact: in the Chow ring of $\mathfrak{X}$, we have the relations $D = YZ = -Y^2 = -Z^2$.
\end{Observation}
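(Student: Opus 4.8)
The plan is to derive all three assertions of Observation~\ref{obs-2} from a single fact: the total space's special fiber $X$, regarded as a Cartier divisor on $\mathfrak{X}$, is the pullback under $f\colon\mathfrak{X}\to S$ of the point divisor $s$ on the base curve, and therefore restricts to the trivial line bundle on $X$ itself.

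First I would note that, because the fibers of $\mathfrak{X}\to S$ are reduced (Definition~\ref{def-oneparam}), the scheme-theoretic fiber $f^{-1}(s)$ is exactly the reduced divisor $X=Y+Z$, each irreducible component occurring with multiplicity one; as a Cartier divisor $X=f^{*}(s)$. Restricting to $X$ and using that $f|_{X}$ factors through the point $s$ gives $O_\mathfrak{X}(X)|_{X}=(f|_{X})^{*}O_{S}(s)\cong O_{X}$, which is the displayed identity. Tensoring the resulting isomorphism $O_\mathfrak{X}(Y)|_{X}\otimes O_\mathfrak{X}(Z)|_{X}\cong O_{X}$ immediately yields $O_\mathfrak{X}(Z)|_{X}\cong(O_\mathfrak{X}(Y)|_{X})^{\vee}$.

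Next, for the normal bundle reformulation I would apply Observation~\ref{obs-1} twice, once to the pair $(Y,Z)$ and once to $(Z,Y)$: the normal bundle of $D$ in $Y$ is $O_{Y}(D)|_{D}=O_\mathfrak{X}(Z)|_{D}$, while the normal bundle of $D$ in $Z$ is $O_{Z}(D)|_{D}=O_\mathfrak{X}(Y)|_{D}$, and restricting the identity $O_\mathfrak{X}(Y)|_{X}\otimes O_\mathfrak{X}(Z)|_{X}\cong O_{X}$ further from $X$ to $D$ shows these two bundles are mutually dual. For the Chow-ring relations, the simple normal crossings hypothesis gives $YZ=D$ in the Chow ring of $\mathfrak{X}$ (transversal intersection of the two subvarieties), while $X=Y+Z$ is linearly equivalent to any other fiber $f^{-1}(s')$ with $s'\neq s$, which is disjoint from both $Y$ and $Z$; hence $XY=0$, so that $Y^{2}=-YZ=-D$, and symmetrically $Z^{2}=-YZ=-D$.

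This is essentially bookkeeping, so I do not expect a genuine obstacle. The only points requiring a little care are the passage from the scheme-theoretic fiber to the divisor identity $X=Y+Z$, which uses reducedness of the fibers, and the movability of the fiber class needed for $XY=0$, which uses that $S$ is a curve (so distinct points on $S$ are linearly equivalent); everything else is formal manipulation of restrictions of line bundles built on top of Observation~\ref{obs-1}.
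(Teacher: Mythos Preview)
Your approach matches the paper's: the observation in the paper is essentially self-proving (the ``Since\ldots'' clause \emph{is} the argument), and your write-up just unpacks that reasoning in more detail. The line-bundle and normal-bundle portions are fine.

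There is, however, one slip in the Chow-ring paragraph. You assert that $X=f^{-1}(s)$ is linearly equivalent to any other fiber $f^{-1}(s')$ because ``$S$ is a curve (so distinct points on $S$ are linearly equivalent)''. That is false in general: Definition~\ref{def-oneparam} only requires $S$ to be a smooth proper curve, and on a curve of positive genus two distinct points need not be linearly (or rationally) equivalent. So you cannot move $X$ off itself this way.

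The fix is immediate from what you already proved. You established $O_\mathfrak{X}(X)|_X\cong O_X$; restricting further to $Y$ gives $O_\mathfrak{X}(X)|_Y\cong O_Y$, and hence in the Chow ring
\[
X\cdot Y \;=\; c_1\bigl(O_\mathfrak{X}(X)|_Y\bigr)\cap[Y]\;=\;0.
\]
From $(Y+Z)\cdot Y=0$ and $YZ=D$ you then get $Y^2=-D$, and symmetrically $Z^2=-D$. So the conclusion stands; only the justification of $XY=0$ needs to be replaced by this triviality-of-the-normal-bundle argument rather than by moving the fiber.
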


These observations allow us to reinterpret the intersection number $[L^{d+1-j}D^{j-1}Y]$ of (\ref{eqn-eY}) in a few different ways.  The cycle $[D^{j-1}]$ on $Y$ can be written as a cycle on $\mathfrak{X}$, namely $[Z^{j-1}Y]$.  By Observation \ref{obs-2}, in the Chow ring of $\mathfrak{X}$ this is the same as $(-1)^j[Y^{j-1}Z]$, which equals the cycle $(-1)^j [D^{j-1}]$ computed on $Z$.  Since this cycle in the Chow ring of $\mathfrak{X}$ is supported on $X$ (in fact on $D$), it can be intersected with $L$.

Therefore, for each $j \geq 2$, the intersection number $[L^{d+1-j}D^{j-1}Y]$ can be interpreted as an intersection number on $Y$ (namely $[L^{d+1-j}D^{j-1}]$) or on $Z$ (namely $(-1)^j [L^{d+1-j}D^{j-1}]$).  If $L$ happens to be the restriction to $X$ of a line bundle $\mathcal{L}$ on $\mathfrak{X}$, then it also equals $(-1)^{j-1}[\mathcal{L}^{d+1-j}Y^j]$.

All of these interpretations can be unified by writing the intersection number $[L^{d+1-j}D^{j-1}Y]$ as $(-1)^{j-1}[L^{d+1-j}Y^j]$ (for $j \geq 2$).  In this notation, the factor $Y^j$ can be interpreted as a cycle on $\mathfrak{X}$; in particular, the relations $YZ = -Y^2 = -Z^2$ hold.  
The terms $[L^dY]$ and $[L^dX]$ in (\ref{eqn-eY}) can similarly be interpreted in the Chow ring of $\mathfrak{X}$, which motivates our notation.

(Note moreover that our observations imply that the line bundle $O_\mathfrak{X}(Y)|_X$ is independent of the choice of family $\mathfrak{X}$.)

Thus we can write $e_Y(L)$ in the following form, which will be more convenient to use than (\ref{eqn-eY}):

\begin{Prop} \label{prop-cleaneY}
Suppose $X$ is a variety of dimension $d$ that has a smooth deformation.  Let $Y$ be a union of irreducible components of $X$, and let $L$ be any line bundle on $X$.  Then
\[
e_Y(L) = d! \left( \chi(X,L) \cdot \frac{1}{d+1} \sum_{j=1}^{d+1} \binom{d+1}{j} (-1)^{j-1} [L^{d+1-j} Y^{j}] - [L^d X] \cdot \chi(Y,L) \right) \text{,}
\]
where the intersection number $[L^{d+1-j} Y^{j}]$ is interpreted as above.
Furthermore, if $L$ is the restriction of a line bundle $\mathcal{L}$ on $\mathfrak{X}$, then 
\[
e_Y(L) = d! \left( \chi(X,L) \cdot \frac{1}{d+1} \left( [\mathcal{L}^{d+1}]-[(\mathcal{L}-Y)^{d+1}] \right) - [L^d X] \cdot \chi(Y,L) \right) \text{.}
\]
\end{Prop}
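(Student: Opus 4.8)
The plan is to derive both identities directly from Definition \ref{def-eY}, using the reinterpretation of the brackets $[L^{d+1-j}D^{j-1}Y]$ carried out in the paragraphs immediately preceding the statement together with the binomial theorem; no new geometric input is needed, since Observations \ref{obs-1} and \ref{obs-2} already encode everything about the smooth deformation that gets used.

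First I would record that the first displayed formula is essentially a restatement of (\ref{eqn-eY}). The two expressions agree term by term in the sum over $j$: for $j=1$ one has $D^{0}Y = Y = Y^{1}$ and $(-1)^{1-1}=1$, so the $j=1$ summands literally coincide, and the terms $[L^dX]$ and $\chi(Y,L)$ are untouched; for $j\geq 2$ the discussion before the proposition shows, via Observations \ref{obs-1} and \ref{obs-2}, that the cycle $[D^{j-1}]$ on $Y$, viewed in the Chow ring of $\mathfrak{X}$, equals $(-1)^{j-1}[Y^{j}]$ (each use of $YZ = -Y^2$ contributing one sign, so that $[Z^{j-1}Y] = (-1)^{j-1}[Y^j]$), and that this class is supported on $D \subset X$, so that its intersection with $L^{d+1-j}$ is defined. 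Substituting yields the first formula verbatim.

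Next I would treat the case $L = \mathcal{L}|_X$. The fact to isolate is that for every $j$ from $1$ to $d+1$ the class $Y^{j}$, read in the Chow ring of $\mathfrak{X}$, is supported on $X$: for $j=1$ it is the divisor $Y \subset X$, and for $j\geq 2$ it is supported on $D \subset X$. Hence $[\mathcal{L}^{d+1-j}Y^{j}] = [L^{d+1-j}Y^{j}]$ for all such $j$, because an intersection number of a line bundle against a cycle class depends only on the restriction of the line bundle to a neighborhood of the support, and $\mathcal{L}|_X = L$. I would then expand by the binomial theorem,
\[
[(\mathcal{L}-Y)^{d+1}] = \sum_{j=0}^{d+1}\binom{d+1}{j}(-1)^{j}[\mathcal{L}^{d+1-j}Y^{j}],
\]
subtract from $[\mathcal{L}^{d+1}]$ to cancel the $j=0$ term, and obtain
\[
[\mathcal{L}^{d+1}]-[(\mathcal{L}-Y)^{d+1}] = \sum_{j=1}^{d+1}\binom{d+1}{j}(-1)^{j-1}[L^{d+1-j}Y^{j}],
\]
which, inserted into the first formula, is exactly the second.

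No step here is a real obstacle; the content is bookkeeping about signs and binomial coefficients. The one point I would be careful about is the assertion used in the second half -- that replacing $\mathcal{L}$ by $L = \mathcal{L}|_X$ leaves the relevant intersection numbers unchanged. This is the projection formula for the closed immersion $X \hookrightarrow \mathfrak{X}$ (equivalently, $c_1(\mathcal{L})$ and $c_1(\mathcal{L}|_X)$ act identically on classes pushed forward from $X$), and it is precisely what legitimizes the unified interpretation of the bracket notation introduced just before the statement. I would also note that the argument invokes the smoothness (or mere factoriality, or $\mathbf{Q}$-factoriality) of $\mathfrak{X}$ only through Observations \ref{obs-1} and \ref{obs-2}, so the proposition holds in the more general settings indicated earlier.
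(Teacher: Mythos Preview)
Your proposal is correct and matches the paper's approach: the proposition is stated without a separate proof, being an immediate consequence of the reinterpretation of $[L^{d+1-j}D^{j-1}Y]$ as $(-1)^{j-1}[L^{d+1-j}Y^j]$ worked out in the preceding paragraphs, together with the binomial expansion you give for the second formula. You have faithfully filled in the bookkeeping that the paper leaves implicit.
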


\vspace{.2in}

\subsection{A fundamental identity} \label{subsection-identity}

The following identity is at the heart of the results of Sections \ref{section-curve} and \ref{section-arbitrary}:

\begin{Prop} \label{prop-identity}
Let $X$ be a variety of dimension $d$, let $Y$ be a union of irreducible components of $X$, let $Z = \overline{X \backslash Y}$ denote the union of the irreducible components not in $Y$, and let $L$ be any line bundle on $X$.  If $X$ has a smooth deformation, then $e_Z(L) = -e_Y(L+Y)$.
\end{Prop}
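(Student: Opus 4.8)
The plan is to expand both sides with the ``clean'' formula of Proposition~\ref{prop-cleaneY} and reduce the claim to a few auxiliary identities that force everything to cancel. Write $Z = \overline{X\backslash Y}$, $D = Y\cap Z$, and for a union of components $W\subset X$ and a line bundle $M$ on $X$ put $S_W(M) = \frac{1}{d+1}\sum_{j=1}^{d+1}\binom{d+1}{j}(-1)^{j-1}[M^{d+1-j}W^{j}]$, so that (by Proposition~\ref{prop-cleaneY}) $e_Z(L) = d!\,(\chi(X,L)\,S_Z(L) - [L^dX]\,\chi(Z,L))$ and $e_Y(L+Y) = d!\,(\chi(X,L+Y)\,S_Y(L+Y) - [(L+Y)^dX]\,\chi(Y,L+Y))$. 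I would then prove the four identities
\[
\chi(X,L+Y) = \chi(X,L),\qquad [(L+Y)^dX] = [L^dX],
\]
\[
\chi(Z,L)+\chi(Y,L+Y) = \chi(X,L),\qquad S_Z(L)+S_Y(L+Y) = [L^dX].
\]
Substituting these into $e_Z(L)+e_Y(L+Y)$ collapses it to $d!\,(\chi(X,L)[L^dX] - [L^dX]\chi(X,L)) = 0$, which is the proposition.

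The first three identities are routine bookkeeping with exact sequences. By Observations~\ref{obs-1} and~\ref{obs-2} one has $O_\mathfrak{X}(Y)|_Y = O_Y(-D)$, $O_\mathfrak{X}(Y)|_Z = O_Z(D)$ and $O_\mathfrak{X}(Y)|_D = O_Z(D)|_D$, hence $(L+Y)|_Y = L|_Y(-D)$, $(L+Y)|_Z = L|_Z(D)$, $(L+Y)|_D = L|_D\otimes O_Z(D)|_D$. Feeding these into the Mayer--Vietoris sequence $0\to M\to M|_Y\oplus M|_Z\to M|_D\to 0$ for the simple normal crossings decomposition $X = Y\cup Z$, applied to both $M = L$ and $M = L+Y$ and combined with the ideal-sheaf sequences on $Y$ and on $Z$, makes the terms twisted by $O_Z(D)|_D$ cancel and yields both $\chi(X,L+Y)=\chi(X,L)$ and $\chi(Z,L)+\chi(Y,L+Y)=\chi(X,L)$. (The first also follows at once from flatness of $\mathfrak{X}\to S$ once $L$ is known to extend over $\mathfrak{X}$, since then $\mathcal{L}$ and $\mathcal{L}\otimes O_\mathfrak{X}(Y)$ agree on the generic fiber.) And $[(L+Y)^dX]=[L^dX]$ because every term past the leading one in the binomial expansion of $(L+Y)^d$ carries a factor $Y^mX$ with $m\geq 1$, and $YX = Y^2+YZ = -D+D = 0$ in the Chow ring of $\mathfrak{X}$ by Observation~\ref{obs-2}.

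The remaining identity $S_Z(L)+S_Y(L+Y) = [L^dX]$ is the heart of the matter and the step I expect to cost the most work; it is purely intersection-theoretic. When $L$ is the restriction of a line bundle $\mathcal{L}$ on $\mathfrak{X}$, the second formula of Proposition~\ref{prop-cleaneY} gives $S_Z(L) = \tfrac{1}{d+1}([\mathcal{L}^{d+1}]-[(\mathcal{L}-Z)^{d+1}])$ and $S_Y(L+Y) = \tfrac{1}{d+1}([(\mathcal{L}+Y)^{d+1}]-[\mathcal{L}^{d+1}])$; the $[\mathcal{L}^{d+1}]$ terms cancel, and since $\mathcal{L}+Y = (\mathcal{L}-Z)+X$ one expands $(\mathcal{L}+Y)^{d+1} = ((\mathcal{L}-Z)+X)^{d+1}$, drops the terms with $X^k$ for $k\geq 2$ (these vanish as $X^2 = (Y+Z)^2 = -D+2D-D = 0$), and is left with $(d+1)[(\mathcal{L}-Z)^dX] = (d+1)[\mathcal{L}^dX]$, the last equality because $ZX = YZ+Z^2 = 0$. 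For a line bundle $L$ on $X$ that need not extend over $\mathfrak{X}$, the same manipulation is carried out directly on the defining sums: both sides depend on $L$ only through its restrictions to $Y$, $Z$, and $D$ (interpreted via the intersection-number conventions fixed before Proposition~\ref{prop-cleaneY}), and the double sum coming from $S_Y(L+Y)$ resums cleanly using the binomial identity $\sum_{j=1}^{m}\binom{d+1}{j}\binom{d+1-j}{m-j}(-1)^{j-1} = \binom{d+1}{m}$ for $m\geq 1$, after which the vanishing of $YX$, $ZX$, and of $X^k$ for $k\geq 2$ again supplies all the cancellations.
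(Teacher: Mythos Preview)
Your proof is correct and takes essentially the same approach as the paper's: your third and fourth identities are precisely the paper's equations (\ref{eqn-2}) and (\ref{eqn-1}), established by the same exact sequences and the same binomial resummation (the paper writes $L^{d+1-j}=((L+Y)-Y)^{d+1-j}$ and expands, which unwinds to your identity $\sum_{j=1}^{m}\binom{d+1}{j}\binom{d+1-j}{m-j}(-1)^{j-1}=\binom{d+1}{m}$). The only difference is organizational---you add $e_Z(L)+e_Y(L+Y)$ and show it vanishes rather than transforming $e_Z(L)$ directly, which leads you to isolate the auxiliary facts $\chi(X,L+Y)=\chi(X,L)$ and $[(L+Y)^dX]=[L^dX]$ that the paper's final ``substituting'' step uses tacitly.
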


\begin{Remark} \label{remark-dual}
In fact $X$ does not need to have a smooth deformation for this identity to hold; it suffices that the normal bundles of $D$ in $Y$ and $Z$ be dual to each other, as in Observation \ref{obs-2} in Section \ref{subsection-smoothdeformation}.  This happens if, for example, $X$ has a one-parameter deformation whose total space is merely $\mathbf{Q}$-factorial.
\end{Remark}

\begin{proof}
By Proposition \ref{prop-cleaneY} above,
\begin{equation} \label{eqn-3}
e_Z(L) = d! \left( \chi(X,L) \cdot \frac{1}{d+1} \sum_{j=1}^{d+1} \binom{d+1}{j} (-1)^{j-1} [L^{d+1-j} Z^{j}] - [L^d X] \cdot \chi(Z,L) \right) \text{.}
\end{equation}

First we deal with the series in this expression.  By the discussion in Section \ref{subsection-smoothdeformation}, 
\[
[L^{d+1-j}Z^j] = (-1)^j[L^{d+1-j}Y^j]
\]
for each $j \geq 2$, and for the $j=1$ term we have $[L^d Z] = [L^d X]-[L^d Y]$, so
\begin{align*}
\frac{1}{d+1} \sum_{j=1}^{d+1} \binom{d+1}{j} (-1)^{j-1} [L^{d+1-j} Z^{j}]  &= [L^dX]-\frac{1}{d+1} \sum_{j=1}^{d+1} \binom{d+1}{j} [L^{d+1-j} Y^j]\text{.}
\end{align*}
Making the substitution $L^{d+1-j} = ((L+Y)-Y)^{d+1-j}$, expanding the binomial power, and rearranging the resulting series, we obtain
\begin{align} \label{eqn-1}
\frac{1}{d+1} \sum_{j=1}^{d+1} \binom{d+1}{j} (-1)^{j-1} [L^{d+1-j} Z^{j}]
&= [L^dX] \notag
\\
&-\frac{1}{d+1} \sum_{k=1}^{d+1} \binom{d+1}{k} (-1)^{k-1} [(L+Y)^{d+1-k} Y^{k}] \text{.}
\end{align}

Next we deal with the Euler characteristic term in (\ref{eqn-3}).  Let $D = Y \cap Z$; then 
\begin{align} \label{eqn-2}
\chi(Z,L)
 &= \chi(X,L)+\chi(D,L)-\chi(Y,L) \notag
\\ &= \chi(X,L) - \chi(Y,L+Y) \text{.}
\end{align}
The first line follows from the exact sequence
\begin{align*}
&0 \rightarrow L \rightarrow L|_Y \oplus L|_Z \rightarrow L|_D \rightarrow 0 \text{}
\end{align*}
of sheaves on $X$.  The second line follows from the exact sequence 
\begin{align*}
&0 \rightarrow O_Y (-D) \rightarrow O_Y \rightarrow O_{D} \rightarrow 0 \text{}
\end{align*}
of sheaves on $Y$.

Substituting (\ref{eqn-1}) and (\ref{eqn-2}) into (\ref{eqn-3}) produces the identity $e_Z(L) = -e_Y(L+Y)$.
\end{proof}

\begin{Remark}
If $L$ is the restriction to $X$ of a line bundle $\mathcal{L}$ on $\mathfrak{X}$, then the result of (\ref{eqn-1}) can be obtained more cleanly using intersections on $\mathfrak{X}$: 
\begin{align*}
\frac{1}{d+1} \sum_{j=1}^{d+1} \binom{d+1}{j} (-1)^{j-1} [L^{d+1-j} Z^{j}] &= [\mathcal{L}^{d+1}]-[(\mathcal{L}-Z)^{d+1}]
\\
&= [(\mathcal{L})^{d+1}]-[(\mathcal{L}+Y-X)^{d+1}]
\\
&= [(\mathcal{L})^{d+1}]-([(\mathcal{L}+Y)^{d+1}]-[L^d X]) \text{.}
\end{align*}
In the final step we use the fact that for $j \geq 2$, $[L^{d+1-j} X^j] = 0$ (since $X^2$ represents the line bundle $O_\mathfrak{X}(X)|_X$, which is trivial).
\end{Remark}

\vspace{.2in}

\section{A criterion for existence and uniqueness of semistable twists} \label{section-formal}

In this section we state and prove an essentially combinatorial result, Theorem \ref{thm-tree}, which will allow us in Sections \ref{section-curve} and \ref{section-arbitrary} to deduce from Proposition \ref{prop-identity} that line bundles on certain varieties have (generically) unique semistable twists.

\vspace{.2in}

\subsection{Definitions and observations}

Let $X$ be a variety of dimension $d$, $Y$ a union of irreducible components of $X$, and $L$ a line bundle on $X$.  Observe that the function $e_Y(L+bY)$ is a polynomial (of degree at most $d$) in $b$, so it makes sense to consider the real solutions, rather than only the integer solutions, of the inequalities governing the semistability of $L+bY$.

\begin{definition} \label{def-**}
Let $X$ be a variety that has a smooth deformation, let $H$ be a line bundle on $X$, let $Y \subset X$ be a union of irreducible components of $X$, and let $Z = \overline{X \backslash Y}$.  We say the pair $(X,Y)$ is \emph{$H^-$-twistable} if for any line bundle $L$, the set of real numbers $b \in \mathbf{R}$ such that $e_Y(L+mH+bY) \leq 0$ and $e_Z(L+mH+bY) \leq 0$  for every sufficiently large positive integer $m$ is a unit interval containing at least one of its endpoints.

We say the pair $(X,Y)$ is \emph{$H^+$-twistable} if for any line bundle $L$, the set of real numbers $b \in \mathbf{R}$ such that $e_Y(L+mH+bY) \geq 0$ and $e_Z(L+mH+bY) \geq 0$  for every sufficiently large positive integer $m$ is a unit interval containing at least one of its endpoints.
\end{definition}

The fact that the real solutions of this system of inequalities constitute a unit interval containing at least one of its endpoints means that either there is a unique integer solution, or there are exactly two (which occurs if and only if the interval is closed and its endpoints are integers).  In particular, for varieties with exactly two irreducible components, it immediately implies the desired result:

\begin{Observation} \label{obs-twocomponent}
Let $X$ be a variety that has a smooth deformation, and suppose $X$ has exactly two irreducible components, $Y$ and $Z$.  If the pair $(X,Y)$ is $H^-$-twistable (resp. $H^+$-twistable), then any line bundle $L$ on $X$ has either a unique $H^-$-semistable (resp. $H^+$-semistable) twist, or exactly two.
\end{Observation}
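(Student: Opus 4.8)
\textbf{Proof plan for Observation \ref{obs-twocomponent}.}

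The plan is to reduce the statement about counting semistable twists directly to the twistability hypothesis, using the elementary lattice-theoretic fact about unit intervals already noted in the text. First I would recall that since $X$ has exactly two irreducible components $Y$ and $Z$, a twist of a line bundle $L$ on $X$ has the form $L \otimes O_\mathfrak{X}(aY + bZ)|_X$. By Observation \ref{obs-2}, $O_\mathfrak{X}(Y)|_X$ and $O_\mathfrak{X}(Z)|_X$ are mutually dual, so $O_\mathfrak{X}(aY+bZ)|_X \cong O_\mathfrak{X}((a-b)Y)|_X$; hence every twist of $L$ is isomorphic to $L + bY$ for a unique integer $b$, and conversely distinct integers $b$ give nonisomorphic twists. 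Thus counting semistable twists of $L$ amounts to counting integers $b \in \mathbf{Z}$ for which $L + bY$ is semistable.

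Next I would unwind what it means for $L + bY$ to be $H^-$-semistable (the $H^+$ case is identical with inequalities reversed). By Definition \ref{def-stability}, this requires $e_W(L + bY + mH) \leq 0$ for every union of irreducible components $W$ and every sufficiently large $m$. Since $X$ has only two components, the only unions $W$ for which the condition is nontrivial are $W = Y$ and $W = Z$ (the cases $W = X$ and $W = \emptyset$ give $e_W = 0$ identically, or are vacuous). So $L + bY$ is $H^-$-semistable exactly when $e_Y(L + mH + bY) \leq 0$ and $e_Z(L + mH + bY) \leq 0$ for all sufficiently large $m$ — which is precisely the system of inequalities in Definition \ref{def-**}. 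By the $H^-$-twistability hypothesis, the set of real $b$ satisfying this system is a unit interval containing at least one endpoint.

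Finally I would invoke the elementary observation, already stated in the paragraph following Definition \ref{def-**}, that a unit interval (i.e.\ an interval of length $1$) containing at least one of its endpoints contains either exactly one integer or exactly two, the latter occurring precisely when the interval is closed with integer endpoints. Combining this with the bijection between semistable twists of $L$ and such integers $b$ yields the conclusion: $L$ has either a unique $H^-$-semistable twist or exactly two. I do not anticipate a serious obstacle here; the only point requiring a little care is the bookkeeping in the first step — confirming that the two-parameter family of twists collapses to the one-parameter family $\{L + bY\}_{b \in \mathbf{Z}}$ via Observation \ref{obs-2}, and that ``$e_Y \leq 0$ and $e_Z \leq 0$ for all components'' really does reduce to just the pair $(Y,Z)$ when $n = 2$. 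Everything else is a direct translation of the twistability definition into the language of semistable twists.
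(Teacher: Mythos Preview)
Your proposal is correct and matches the paper's approach exactly: the paper treats this observation as immediate from the paragraph preceding it, and you have simply spelled out the details --- the reduction of all twists to the one-parameter family $\{L+bY\}_{b\in\mathbf{Z}}$, the reduction of semistability to the two inequalities for $W=Y$ and $W=Z$, and the elementary count of integers in a unit interval. There is nothing to add.
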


Observation \ref{obs-twocomponent} can be generalized to any variety in which the irreducible components can be dealt with two at a time:

\begin{definition}
Let $X$ be a variety.  We define the \emph{dual graph} of $X$ to be the graph $\Gamma_X$ whose vertices are the irreducible components of $X$, and in which there is an edge between two vertices if and only if the corresponding irreducible components of $X$ have nonempty intersection.
\end{definition}

\begin{Theorem} \label{thm-tree}
Let $X$ be a variety that has a smooth deformation, and assume that its dual graph $\Gamma_X$ is a tree.  Let $H$ be a line bundle on $X$.  Suppose that for every union of irreducible components $Y \subset X$ such that both $Y$ and $\overline{X \backslash Y}$ are connected, the pair $(X,Y)$ is $H^-$-twistable (resp. $H^+$-twistable).  Then any line bundle $L$ on $X$ has at least one $H^-$-semistable (resp. $H^+$-semistable) twist, and at most $2^{n-1}$, where $n$ is the number of irreducible components of $X$.
\end{Theorem}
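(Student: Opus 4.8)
The plan is to induct on the number $n$ of irreducible components of $X$, using the tree structure of $\Gamma_X$ to peel off the components two "halves" at a time. The base case $n = 1$ is the remark following Definition \ref{def-stability} (every line bundle on an irreducible variety is vacuously semistable, so there is exactly one twist, and $2^{n-1} = 1$), and $n = 2$ is precisely Observation \ref{obs-twocomponent}. For the inductive step, since $\Gamma_X$ is a tree it has an edge whose removal disconnects it into two subtrees; let $Y$ and $Z = \overline{X \backslash Y}$ be the corresponding unions of irreducible components, so that both $Y$ and $Z$ are connected and each is itself a variety with simple normal crossings singularities whose dual graph is a (smaller) tree. The hypothesis of the theorem applies to this particular $(X,Y)$, so $(X,Y)$ is $H^-$-twistable.

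First I would use $H^-$-twistability of $(X,Y)$ to reduce to the two halves. Given $L$ on $X$, the set of real $b$ for which $e_Y(L + mH + bY) \le 0$ and $e_Z(L + mH + bY) \le 0$ hold for all large $m$ is a unit interval containing an endpoint, hence contains either one or two integers; for any such integer $b$, replace $L$ by $L + bY$ so that the two "global" inequalities $e_Y \le 0$, $e_Z \le 0$ are satisfied asymptotically. The key point is that further twisting by $O_\mathfrak{X}(\sum a_i X_i)|_X$ where the sum ranges only over components inside $Y$ (resp. inside $Z$) changes $e_W$ for $W \subset Y$ (resp. $W \subset Z$) but leaves $e_Y$ and $e_Z$ themselves unchanged, because such a twist is trivial on the complementary half and the intersection-theoretic quantities defining $e_Y$ depend only on $L|_Y$, $O_Y(D)$, and the global invariants $\chi(X,L)$, $[L^d X]$ — and $\chi(X,L)$, $[L^d X]$ are fixed once $e_Y(L) \le 0$ and $e_Z(L) \le 0$ have been arranged. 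Here I would want to check carefully (this is the main obstacle) that twisting by components of $Z$ alone does not disturb $e_W$ for $W \subset Y$: one needs $O_\mathfrak{X}(\sum_{i \in Z} a_i X_i)|_Y \cong O_Y$, which holds because those $X_i$ meet $Y$ only along $D$ and the relevant relations $X_i X_j = 0$ on $X$ for $i \neq j$ non-adjacent, together with the tree hypothesis, force the restriction to be trivial; making this precise and confirming that consequently $e_W(L + (\text{twist within } Z)) = e_W(L)$ for all $W \subset Y$ is the technical heart of the argument.

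Next I would restrict attention to $Y$ and to $Z$ separately. A union of components $W \subsetneq X$ with both $W$ and $\overline{X \setminus W}$ connected, other than $Y$ and $Z$ themselves, must lie entirely inside $Y$ or entirely inside $Z$ (again because $\Gamma_X$ is a tree: any connected $W$ with connected complement corresponds to an edge cut, and an edge of $\Gamma_X$ either is the distinguished edge or lies inside one of the two subtrees). For such $W \subset Y$, the pair $(Y, W)$ is $H|_Y^-$-twistable: the intersection numbers defining $e_W$ and $e_{\overline{Y \setminus W}}$ with respect to $Y$ agree up to the fixed global factors with those defining them with respect to $X$, using Observations \ref{obs-1} and \ref{obs-2} and the fact that the divisor $W \cap \overline{Y \setminus W}$ inside $Y$ coincides with $W \cap \overline{X \setminus W}$ when $W$ does not meet $Z$ except possibly through already-accounted pieces. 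So $Y$ (with dual graph a tree on $< n$ vertices) satisfies the hypotheses of the theorem, and likewise $Z$.

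Finally I would assemble the count. By induction, $L|_Y$ has at least one and at most $2^{|Y|-1}$ semistable twists within $Y$ (where $|Y|$ is the number of components of $Y$), and similarly for $Z$; combining a semistable twist on the $Y$-side with one on the $Z$-side — which, by the independence noted above, does not affect the already-arranged $e_Y \le 0$, $e_Z \le 0$ — yields a twist of $L$ on $X$ that is $H^-$-semistable, since every $W \subset X$ is either $Y$, or $Z$, or contained in one half (and $e_W$ for a "mixed" $W$ that is a union of a $Y$-part and a $Z$-part reduces, via the fundamental identity of Proposition \ref{prop-identity} applied along the distinguished edge and additivity of Euler characteristics and intersection numbers across $D$, to a combination of the inequalities already secured). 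This gives existence. For the bound: the number of choices is (number of $b$'s, at most $2$) $\times$ (semistable twists on $Y$, at most $2^{|Y|-1}$) $\times$ (semistable twists on $Z$, at most $2^{|Z|-1}$) $\le 2 \cdot 2^{|Y|-1} \cdot 2^{|Z|-1} = 2^{|Y|+|Z|-1} = 2^{n-1}$, using $|Y| + |Z| = n$. I expect the genuinely delicate points to be (i) verifying the independence/compatibility of the $e_W$ under twisting confined to one half — this is where the tree hypothesis is really used — and (ii) handling "mixed" $W$'s, for which Proposition \ref{prop-identity} and the exact-sequence computations in its proof are the right tool.
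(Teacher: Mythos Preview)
Your inductive scheme has a genuine gap at the point where you pass from $X$ to the sub-varieties $Y$ and $Z$.  The hypothesis of the theorem concerns the functions $e_W$ computed \emph{on $X$}: these involve the global quantities $\chi(X,L)$ and $[L^dX]$.  To invoke the inductive hypothesis on $Y$ you would need $(Y,W)$ to be $H|_Y^-$-twistable, which is a statement about the functions $e_W$ computed \emph{on $Y$}, involving $\chi(Y,L|_Y)$ and $[L^dY]$ instead.  Your remark that the two ``agree up to the fixed global factors'' is not enough: the formula~\eqref{eqn-eY} is a \emph{difference} of two terms, each a product of a global and a local factor, so changing the global factors does not simply rescale $e_W$, and the unit-interval conclusion for one version does not transfer to the other.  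Moreover, for $W\subset Y$ containing the bridge vertex, the divisor $W\cap\overline{Y\setminus W}$ differs from $W\cap\overline{X\setminus W}$ by $D=Y\cap Z$, so even the local ingredients disagree; and there is no reason for $Y$ itself to have a smooth deformation, so the inductive hypothesis may not even apply to it.

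The paper avoids all of this by never leaving $X$.  Rather than inducting on sub-varieties, it runs an algorithm directly on the tree $\Gamma_X$: number the vertices $X_1,\dots,X_n$ so that each $X_j$ (for $j\ge 2$) is adjacent to a unique earlier $X_{k_j}$, and at step $j$ use $H^-$-twistability of $(X,Y_j)$---where $Y_j,Z_j$ are the two pieces obtained by deleting the edge $(X_j,X_{k_j})$---to choose the integer coefficient $a_j$.  The key fact (Lemma~\ref{lemma-adjacent}, proved via Proposition~\ref{prop-identity}) is that $e_{Y_j}$ depends only on the coefficients of the two endpoints $X_j$ and $X_{k_j}$ of that edge, so choices made at later steps never disturb inequalities secured earlier.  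This is the precise form of your ``independence'' claim; note however that it is \emph{not} true that twisting by an arbitrary component inside $Y$ leaves $e_Y$ unchanged---the bridge component does affect it---so your restriction argument for independence does not work as stated.  Your ``mixed $W$'' concern is handled separately by Lemma~\ref{lemma-connected}, which reduces arbitrary $W$ to those with connected complement, again using Proposition~\ref{prop-identity}.
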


\begin{Remark}
When $X$ is a nodal curve, the dual graph of $X$ as defined here is the result of collapsing all multiple edges in the dual graph of $X$ as it is ordinarily defined.  In particular, the curves to which Theorem \ref{thm-tree} applies are those of \emph{pseudo-compact type}, in the terminology of \cite{osserman}.
\end{Remark}

\begin{Remark} \label{remark-strict}
The dichotomy imposed by the unit interval of Definition \ref{def-**}---unique semistable twists in the general case versus exactly two in special cases---mirrors the phenomenon of stability versus strict semistability in geometric invariant theory.  Let us explore this briefly.

By analogy with GIT, we may say a semistable line bundle $L$ on a variety $X$ is \emph{stable} if no nontrivial twist of it is semistable (that is, if, when $L$ is the limit of a family of line bundles, it is the \emph{only} limit), and \emph{strictly semistable} otherwise.  Then, under the hypotheses of Theorem \ref{thm-tree}, the semistable twists of a line bundle $L$ will be strictly semistable (that is, there will be more than one of them) if and only if for some $Y$, the unit interval of Definition \ref{def-**} contains exactly two integers.

See Remarks \ref{remark-curvestrict} and \ref{remark-dimd-strictss} for precise conditions on when this happens.
\end{Remark}

\vspace{.2in}

\subsection{Proof of Theorem \ref{thm-tree}}

We now prove Theorem \ref{thm-tree} for varieties $X$ such that each pair $(X,Y)$ with both $Y$ and $\overline{X \backslash Y}$ connected is $H^-$-twistable.  (The proof of the $H^+$-twistable version is identical.)  The proof requires the following two lemmas:

\begin{Lemma} \label{lemma-adjacent}
Let $X$ be a variety that has a smooth deformation.  Let $X$ have irreducible components $X_1,\ldots,X_n$, let $Y$ be some union of the irreducible components of $X$, and let $Z = \overline{X \backslash Y}$.  Let $J = \{i: X_i \cap Y \neq \emptyset \text{ and } X_i \cap Z \neq \emptyset\}$.  Then for any line bundle $L$ on $X$ and any $a_1,\ldots,a_n \in \mathbf{Z}$, we have 
\[
e_Y(L+\sum_{i=1}^n a_i X_i) = e_Y(L+\sum_{i \in J} a_i X_i) \text{.}
\]
\end{Lemma}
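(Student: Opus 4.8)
The plan is to show that the quantity $e_Y(L + \sum a_i X_i)$ does not change when we discard the summands $a_i X_i$ for which $X_i$ is disjoint from either $Y$ or $Z$. The natural strategy is to reduce to a single summand: by induction on the number of components outside $J$ whose coefficient is nonzero, it suffices to prove that $e_Y(L + a X_i) = e_Y(L)$ whenever $i \notin J$, i.e. whenever $X_i$ does not meet both $Y$ and $Z$. (Here one uses that $e_Y$ depends only on the line bundle, so adding the summands one at a time is legitimate, and that removing one component from $J^c$ leaves $J$ unchanged.) So the heart of the matter is the single-component case.

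For that case, I would split into two subcases according to whether $X_i \subset Z$ (so $X_i \cap Y = \emptyset$) or $X_i \subset Y$ (so $X_i \cap Z = \emptyset$); note $X_i \subset X$ is covered by these since each component lies in exactly one of $Y$, $Z$. First suppose $X_i \subset Z$, so $X_i$ is disjoint from $Y$. Then I would use the cleaned-up formula of Proposition \ref{prop-cleaneY}, where $e_Y(L')$ is built from the intersection numbers $[L'^{d+1-j} Y^j]$ (computed in the Chow ring of a smooth deformation $\mathfrak{X}$), the degree $[L'^d X]$, and the Euler characteristics $\chi(X, L')$ and $\chi(Y, L')$. Replacing $L$ by $L + aX_i$: the class $Y^j$ for $j \geq 1$ is supported on $D = Y \cap \overline{X\backslash Y}$ (by Observation \ref{obs-2}, $Y^2 = -D$, etc.), hence on $Y$; since $X_i$ is disjoint from $Y$, the class $X_i$ restricted to $Y$ is trivial, so $[(L+aX_i)^{d+1-j} Y^j] = [L^{d+1-j} Y^j]$ for all $j \geq 1$. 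Likewise $\chi(Y, L + aX_i) = \chi(Y, L)$ since $O_\mathfrak{X}(X_i)|_Y$ is trivial. The remaining two ingredients, $[L^d X]$ and $\chi(X, L)$, do get affected by $X_i$, but they enter the formula only through the common prefactors $\chi(X, L)$ and $[L^d X]$ multiplying the $Y$-dependent brackets and $\chi(Y,L)$ respectively — wait, this is exactly where care is needed. Looking again at the formula, $e_Y(L') = d!\big(\chi(X,L')\cdot \tfrac{1}{d+1}\sum_j \binom{d+1}{j}(-1)^{j-1}[L'^{d+1-j}Y^j] - [L'^d X]\cdot \chi(Y,L')\big)$, and the $j=1$ term of the sum is $[L'^d Y]$, which for $L' = L + aX_i$ with $X_i \cap Y = \emptyset$ still equals $[L^d Y]$. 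So the correct observation is that the prefactor $\chi(X,L')$ multiplies $\tfrac{1}{d+1}\sum \cdots = [L^dX']\!$? No — one must instead verify directly. The clean way: use Proposition \ref{prop-identity}, $e_Y(L + aX_i) = -e_Z((L+aX_i) - Y) = -e_Z(L - Y + aX_i)$ when $X_i \subset Z$; now $X_i$ appears additively inside an $e_Z$, and $X_i \subset Z$ means $X_i$ is a component of $Z$, so I should instead handle the case $X_i \subset Y$ first and use the identity to transfer. Concretely: if $X_i \subset Y$, a completely symmetric argument to the above shows $e_Z(L + aX_i) = e_Z(L)$ — because $Z^j$ is supported on $D \subset Z$, disjoint from $X_i$, and $\chi(Z, L+aX_i) = \chi(Z,L)$ — provided that the $[L^d X]$ and $\chi(X,L)$ prefactors can be shown not to interfere, which is the real computation. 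Then by Proposition \ref{prop-identity}, $e_Y(L + aX_i) = -e_Z((L+aX_i) + Y) $? — the identity reads $e_Z(M) = -e_Y(M + Y)$, so $e_Y(L') = -e_Z(L' - Y)$; taking $L' = L + aX_i$ with $X_i \subset Y$ gives $e_Y(L+aX_i) = -e_Z(L - Y + aX_i)$, and since $X_i \subset Y \subset \overline{X \backslash Z}$... this is getting circular.

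The honest plan is therefore: prove the single-component statement $e_Y(L+aX_i) = e_Y(L)$ directly from Proposition \ref{prop-cleaneY}, checking each of the four ingredients. The key identity needed is that the whole expression, not just individual pieces, is insensitive to $aX_i$. The cleanest route is the "pull back to $\mathfrak{X}$" formula: one may first reduce to the case $L = \mathcal{L}|_X$ for some $\mathcal{L}$ on a smooth deformation $\mathfrak{X}$ (replacing $\mathfrak{X}$ by a base change and $L$ by a twist if necessary, using that both sides depend only on differences of twists), and then $e_Y(L) = d!\big(\chi(X,L)\cdot\tfrac{1}{d+1}([\mathcal{L}^{d+1}] - [(\mathcal{L}-Y)^{d+1}]) - [L^dX]\cdot\chi(Y,L)\big)$. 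Replacing $\mathcal{L}$ by $\mathcal{L} + aX_i$: one has $[\mathcal{L}^{d+1}] - [(\mathcal{L} - Y)^{d+1}] = \sum_{j\geq 1}\binom{d+1}{j}(-1)^{j-1}[\mathcal{L}^{d+1-j}Y^j]\cdot\tfrac{1}{?}$ — actually $[\mathcal{L}^{d+1}] - [(\mathcal{L}-Y)^{d+1}] = \sum_{j=1}^{d+1}\binom{d+1}{j}(-1)^{j-1}[\mathcal{L}^{d+1-j}Y^j]$, and for $j \geq 2$ the class $Y^j$ pushes to a cycle on $D$, disjoint from $X_i$ in the case $X_i \not\subset Y$ adjacent-to-$Y$... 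I expect the main obstacle to be precisely this bookkeeping: showing that the changes in $[L^dX]$ and $\chi(X,L)$ are exactly absorbed. In fact I suspect the right statement is that $\chi(X, L+aX_i) - \chi(X,L)$ and $[(L+aX_i)^dX] - [L^dX]$ are each controlled by intersection data on $D$, and when $X_i$ is disjoint from $D$ (which holds when $X_i \not\in J$: if $X_i \subset Z$ then $X_i \cap Y = \emptyset \supseteq X_i \cap D$, if $X_i \subset Y$ then $X_i \cap Z = \emptyset \supseteq X_i \cap D$), these differences vanish, so \emph{all four} ingredients are unchanged. That observation — that $i \notin J$ forces $X_i \cap D = \emptyset$ — is the linchpin, and once it is in hand each of $\chi(X, -)$, $\chi(Y,-)$, $[(-)^d X]$, and the bracket sum is visibly unaffected by $+aX_i$, since all the relevant cohomology and intersection computations localize along $D$ or along $Y$, both disjoint from $X_i$. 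The induction then finishes the proof. I would present the $X_i \cap D = \emptyset$ reduction as the first step, then dispatch the four ingredients, then invoke induction.
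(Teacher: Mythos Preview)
Your reduction to a single component and the case split according to whether $X_i \subset Z$ (so $X_i \cap Y = \emptyset$) or $X_i \subset Y$ (so $X_i \cap Z = \emptyset$) is exactly the paper's setup. But the final plan has two problems.

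First, your worry about $\chi(X,L)$ and $[L^dX]$ is misdiagnosed. These quantities are invariant under twisting by \emph{any} union of components $W$, not merely those with $W \cap D = \emptyset$. For the degree: $W \cdot X = W^2 + W \cdot \overline{X\setminus W}$, and by Observation~\ref{obs-2} these cancel, so $[(L+W)^d X] = [L^d X]$. For the Euler characteristic, a Mayer--Vietoris computation along $W$ and $\overline{X\setminus W}$ (parallel to equation~(\ref{eqn-2}) in the proof of Proposition~\ref{prop-identity}) gives $\chi(X, L+W) = \chi(X, L)$; indeed this identity is already implicitly used in that proof. So the reason these two global ingredients are unaffected has nothing to do with $X_i$ missing $D$.

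Second, and more seriously, your final plan does not handle the case $X_i \subset Y$. You assert that the computations ``localize along $D$ or along $Y$, both disjoint from $X_i$,'' but when $X_i \subset Y$ the component $X_i$ is obviously not disjoint from $Y$. Here $O_\mathfrak{X}(X_i)|_Y$ is nontrivial, so neither $\chi(Y, L+aX_i) = \chi(Y, L)$ nor $[(L+aX_i)^d Y] = [L^dY]$ is automatic. Both do turn out to hold, but this uses the full hypothesis $X_i \cap Z = \emptyset$ (so that the boundary of $X_i$ inside $X$ lies entirely in $Y$), not just $X_i \cap D = \emptyset$; it is a genuine extra computation, not the observation you propose.

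The paper sidesteps this. It treats only the case $X_i \cap Y = \emptyset$ directly: there $O_\mathfrak{X}(X_i)|_Y$ is trivial, so the $Y$-local terms $[(L+X_i)^{d+1-j}Y^j]$ and $\chi(Y,L+X_i)$ are obviously unchanged, and combined with the global invariance above this gives $e_Y(L+X_i)=e_Y(L)$. For the case $X_i \cap Z = \emptyset$, the paper uses exactly the Proposition~\ref{prop-identity} sandwich you began and then abandoned: from the symmetric form $e_Y(M) = -e_Z(M+Z)$ one gets
\[
e_Y(L+X_i) \;=\; -e_Z(L+X_i+Z) \;=\; -e_Z(L+Z) \;=\; e_Y(L),
\]
the middle equality being the first case applied with the roles of $Y$ and $Z$ swapped. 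You were one line away from this; your ``this is getting circular'' was premature.
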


\begin{proof}[Proof of Lemma \ref{lemma-adjacent}.]
It suffices to show that for any line bundle $L$ on $X$, if $i \notin J$, then $e_Y(L+X_i) = e_Y(L)$.

If $X_i \cap Y = \emptyset$, the desired conclusion is obvious from the fact that \[ [(L+X_i)^jY^{d+1-j}] = [L^jY^{d+1-j}] \] for any $j \leq d$.

If $X_i \cap Z = \emptyset$, then by Proposition \ref{prop-identity}, $e_Y(L+X_i) = -e_Z(L+X_i+Z)$.  By the previous argument applied to $Z$, the right-hand side of this equals $-e_Z(L+Z)$, which by a second application of Proposition \ref{prop-identity} equals $e_Y(L)$.
\end{proof}

\begin{Lemma} \label{lemma-connected}
Let $X$ be a variety that has a smooth deformation.  Suppose $e_Y(L) \leq 0$ for every union of irreducible components $Y$ such that both $Y$ and $Z = \overline{X \backslash Y}$ are connected.  Then $e_Y(L) \leq 0$ for every union of irreducible components $Y$.
\end{Lemma}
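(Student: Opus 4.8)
The plan is to reduce the case of a general union $Y$ to the connected case by peeling off connected pieces one at a time, using Lemma \ref{lemma-adjacent} and Proposition \ref{prop-identity} to control how $e_Y$ changes. First I would observe that since $\Gamma_X$ is a tree (this is the relevant ambient hypothesis, inherited from the context in which the lemma will be applied—though note the lemma as stated does not assume it, so one may need to either add that hypothesis or argue more carefully), a general union $Y$ of irreducible components decomposes into connected components $Y = Y_1 \sqcup \cdots \sqcup Y_r$, and likewise $Z = \overline{X \backslash Y}$ decomposes into connected components. The key structural point is that because $\Gamma_X$ is a tree, the ``boundary'' $D = Y \cap \overline{X \backslash Y}$ is a disjoint union of pieces, each of which lies on exactly one $Y_a$ and one connected component of $Z$; there are no ``cross terms'' linking different $Y_a$'s.

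The main step is to establish the additivity-type relation that lets us compare $e_Y(L)$ with the $e_{Y_a}(L)$. Using Proposition \ref{prop-cleaneY} and the fact that $[L^{d+1-j}Y^j] = \sum_a [L^{d+1-j}Y_a^j]$ when the $Y_a$ are pairwise disjoint (so all mixed monomials $Y_aY_b$ with $a \neq b$ vanish), together with the analogous additivity of Euler characteristics $\chi(Y,L) = \sum_a \chi(Y_a, L)$ (again since the $Y_a$ are disjoint), one should get something like $e_Y(L) = \sum_a e_{Y_a}(L) + (\text{correction})$. I expect the correction term to vanish or to be manifestly nonpositive; tracking exactly what it is, and showing it doesn't spoil the sign, is where the real work lies. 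If $Y_a$ is connected but $\overline{X \backslash Y_a}$ is disconnected, I would further appeal to Lemma \ref{lemma-adjacent}: components of $\overline{X \backslash Y_a}$ that don't meet $Y_a$ can be absorbed or ignored without changing $e_{Y_a}$, and the ones that do meet it, being connected to $Y_a$ in the tree $\Gamma_X$, can be handled so that effectively we are in a situation where both sides of the relevant ``cut'' are connected—or we apply Proposition \ref{prop-identity} to swap $Y_a \leftrightarrow \overline{X\backslash Y_a}$ and reduce to the connected-complement case.

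The hard part will be the bookkeeping in the step above: correctly expanding $\frac{1}{d+1}\sum_j \binom{d+1}{j}(-1)^{j-1}[L^{d+1-j}Y^j]$ when $Y$ splits as a disjoint union, and verifying that the hypothesis ``$e_{Y'} \leq 0$ for all $Y'$ with both sides connected'' really does imply $e_{Y_a} \leq 0$ for each connected piece $Y_a$ even when $\overline{X\backslash Y_a}$ fails to be connected. This last point is exactly where Lemma \ref{lemma-adjacent} (which says $e_{Y_a}$ only depends on the twist along components adjacent to the cut) combined with Proposition \ref{prop-identity} should let us replace a bad cut by a good one. Once each connected piece contributes a nonpositive term and the correction is under control, summing over $a$ gives $e_Y(L) \leq 0$, completing the proof.
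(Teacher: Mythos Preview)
Your plan is the paper's proof. Two of your flagged hedges can be dropped: the tree hypothesis is unnecessary (the lemma holds exactly as stated), and your ``correction term'' is identically zero---disjointness of the $Y_a$ gives $Y^j=\sum_a Y_a^j$ in the Chow ring and $\chi(Y,L)=\sum_a\chi(Y_a,L)$, so $e_Y(L)=\sum_a e_{Y_a}(L)$ on the nose. For the remaining step (connected $Y$ with $Z=\overline{X\setminus Y}$ having connected components $Z_1,\dots,Z_r$), the paper carries out exactly the swap you propose:
\[
e_Y(L)=-e_Z(L+Z)=-\sum_i e_{Z_i}(L+Z)=-\sum_i e_{Z_i}(L+Z_i)=\sum_i e_{W_i}(L),
\]
using in turn Proposition~\ref{prop-identity}, additivity, Lemma~\ref{lemma-adjacent} (each $Z_j$ with $j\neq i$ is disjoint from $Z_i$, so it drops out of the twist), and Proposition~\ref{prop-identity} again; each $W_i=\overline{X\setminus Z_i}$ is connected because it contains $Y$, and $Y$ meets every $Z_j$ since $X$ is connected, so the hypothesis gives $e_{W_i}(L)\le 0$.
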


\begin{proof}[Proof of Lemma \ref{lemma-connected}.]
First observe that if $Y$ is an arbitrary union of irreducible components of $X$, with connected components $Y_1,\ldots,Y_r$, then $e_Y(L) = e_{Y_1}(L)+\ldots + e_{Y_r}(L)$.  Therefore it suffices to prove that $e_Y(L) \leq 0$ for every \emph{connected} union of irreducible components $Y$.

So let $Y$ be a connected union of irreducible components of $X$, and let $Z = \overline{X \backslash Y}$ have connected components $Z_1,\ldots,Z_r$.  Note that for each $i$, $W_i := \overline{X \backslash Z_i}$ is connected (because $W_i$ contains $Y$, which intersects $Z_1,\ldots,Z_r$ since $X$ is connected).  Therefore the hypothesis of the lemma tells us that $e_{W_i}(L) \leq 0$ for every $i$.  Using Proposition \ref{prop-identity} and Lemma \ref{lemma-adjacent}, we have
\begin{align*}
e_Y(L) &= -e_Z(L+Z)
\\
&= -e_{Z_1}(L+Z) - \ldots -e_{Z_r}(L+Z)
\\
&= -e_{Z_1}(L+Z_1) - \ldots -e_{Z_r}(L+Z_r)
\\
&= e_{W_1}(L) + \ldots + e_{W_r}(L)
\\
&\leq 0 \text{.}
\end{align*}
\end{proof}

Now let $X$ and $H$ be as in the hypotheses of Theorem \ref{thm-tree}, and let $L$ be a line bundle on $X$.  We prove the theorem by first giving a recursive algorithm for finding a solution to the system of inequalities defining $H^-$-semistability for twists of $L$, and then verifying that any solution arises from this algorithm.

In the proof, we will conflate the notation for an irreducible component of $X$ and the corresponding vertex of $\Gamma_X$, and more generally for a union of irreducible components of $X$ and the corresponding subgraph of $\Gamma_X$.  Note also that we do not fix a numbering of the irreducible components of $X$ to start with; instead, the algorithm will assign a numbering to them.

Here is the algorithm.

\begin{step1}
Let $X_1$ be any vertex of $\Gamma_X$.  Let $a_1 = 0$.
\end{step1}

\begin{step2}
Let $X_2$ be any vertex of $\Gamma_X$ adjacent to $X_1$.  If we remove the edge $(X_1,X_2)$ from $\Gamma_X$, the resulting graph has two connected components; let $Y_2$ be the connected component containing $X_1$, $Z_2$ the connected component of $X_2$ (so $Z_2 = \overline{X \backslash Y_2}$).  Since $(X,Y_2)$ is $H^-$-twistable, we can find an integer $a_2$ such that
\begin{align*}
e_{Y_2}(L+mH+a_2X_2) &\leq 0
\\
e_{Z_2}(L+mH+a_2X_2) &\leq 0
\end{align*}
for every sufficiently large $m$.
\end{step2}

\begin{stepi}
Assume that in Steps $1$ through $i$ we have accomplished the following:
\begin{itemize}
\item We have chosen distinct irreducible components $X_1,\ldots, X_i$ of $X$ such that for each $j \in \{2, \ldots, i\}$, $X_j$ is adjacent to $X_{k_j}$ for some $k_j<j$.  (Note that $k_j$ is necessarily unique since $\Gamma_X$ is a tree.)
\item For each $j \in \{2, \ldots, i\}$, if we remove the edge $(X_j, X_{k_j})$ from $\Gamma_X$, the resulting graph has two connected components; we let $Y_j$ denote the connected component of $X_{k_j}$, $Z_j$ the connected component of $X_j$.
\item We have found integers $a_2,\ldots,a_i$ such that for each $j \in \{2, \ldots, i\}$,
\begin{align*}
e_{Y_j}(L+mH+a_2X_2+\ldots+a_iX_i) &\leq 0
\\
\text{and } e_{Z_j}(L+mH+a_2X_2+\ldots+a_iX_i) &\leq 0
\end{align*}
for every sufficiently large $m$.
\end{itemize}

Now choose $X_{i+1}$ to be any vertex adjacent to one of $X_1,\ldots,X_i$ (but not equal to any of them).  Let $k$ be the unique element of $\{1,\ldots,i\}$ such that $X_{i+1}$ and $X_k$ are adjacent.
Let $Y_{i+1}$ and $Z_{i+1}$ denote the connected components of $X_k$ and $X_{i+1}$ respectively in the graph obtained by deleting the edge $(X_{i+1},X_k)$.

Since $(X,Y_{i+1})$ is $H^-$-twistable, we can find an integer $a_{i+1}$ such that 
\begin{align*}
e_{Y_{i+1}}(L+a_kX_k+mH+a_{i+1}X_{i+1}) &\leq 0
\\
\text{and } e_{Z_{i+1}}(L+a_kX_k+mH+a_{i+1}X_{i+1}) &\leq 0
\end{align*}
for every sufficiently large $m$.  Since $X_{i+1}$ is not adjacent to $X_j$ for any $j \leq i$ except $j=k$, we have
\begin{align*}
e_{Y_{i+1}}(L+mH+a_2X_2+\ldots + a_{i+1}X_{i+1}) &\leq 0
\\
\text{and } e_{Z_{i+1}}(L+mH+a_2X_2+\ldots + a_{i+1}X_{i+1}) &\leq 0
\end{align*}
for every sufficiently large $m$.  
Also, for each $j \leq i$, we know that the only vertices adjacent to both $Y_j$ and $Z_j$ are $X_j$ and $X_{k_j}$;
in particular, $X_{i+1}$ is not.  So for each $j \leq i$,
\begin{align*}
e_{Y_j}(L+mH+a_2X_2+\ldots + a_{i+1}X_{i+1}) &= e_{Y_j}(L+mH+a_2X_2+\ldots + a_{i}X_{i}) \leq 0
\\
\text{and } e_{Z_j}(L+mH+a_2X_2+\ldots + a_{i+1}X_{i+1}) &= e_{Z_j}(L+mH+a_2X_2+\ldots + a_{i}X_{i}) \leq 0
\end{align*}
for every sufficiently large $m$.

\end{stepi}

We claim that after Step $n$, the numbering $X_1,\ldots,X_n$ of the vertices of $\Gamma_X$ and the integers $a_2,\ldots,a_n$ produced by the algorithm have the following property: for every union of irreducible components $Y$ such that both $Y$ and $Z=\overline{X \backslash Y}$ are connected, 
\[
e_Y(L+mH+ \sum_{i=2}^n a_i X_i) \leq 0
\] 
for every sufficiently large $m$.  Indeed, for each such $Y$, there is a unique edge $(X_k,X_{k'})$ between $Y$ and $Z$.  We may assume $X_k \subset Z$ and $k > k'$.  Then in Step $k$ of the algorithm, $k'$ must have been the unique integer less than $k$ such that there was an edge from $X_k$ to $X_{k'}$.  So $Y = Y_k$ and $Z = Z_k$.

Note also that in Step $2$ there are at most two choices for the integer $a_2$; in Step $3$, given $a_2$, there are at most two choices for $a_3$; and so on.  Therefore the algorithm produces at most $2^{n-1}$ semistable twists of $L$.

To complete the proof of Theorem \ref{thm-tree}, we verify that every semistable twist of $L$ equals one produced by the algorithm.  Suppose $b_1,\ldots,b_n$ are integers such that $L+\sum_{i=1}^n b_i X_i$ is $H^-$-semistable.  Since 
\[
\sum_{i=1}^n b_i X_i \cong \sum_{i=2}^n (b_i-b_1)X_i \text{,}
\]
this implies that, for every union of irreducible components $Y$ such that both $Y$ and $Z=\overline{X \backslash Y}$ are connected, 
\[
e_Y(L+mH+ \sum_{i=2}^n (b_i-b_1) X_i) \leq 0
\]
for every sufficiently large $m$.  Therefore, for each $j \in \{1,\ldots,n\}$,
\begin{align*}
e_{Y_j}(L+mH+\sum_{i=2}^n (b_i-b_1) X_i) &\leq 0
\\
\text{and } e_{Z_j}(L+mH+\sum_{i=2}^n (b_i-b_1) X_i) &\leq 0
\end{align*}
for every sufficiently large $m$.
Taking $j=2$ and removing the terms of the sum corresponding to irreducible components that intersect neither $Y_2$ nor $Z_2$, we find that $b_2-b_1$ must equal one of the (at most two) possible values of $a_2$ found in Step 2 of the algorithm.  Similarly, taking $j=3$ and removing extraneous terms from the sum, we find that $b_3-b_1$ must equal one of the at most two possible values that $a_3$ can have (given that $a_2 = b_2-b_1$).  Continuing in this way, we conclude that $L+\sum_{i=2}^n (b_i-b_1) X_i$ (which is isomorphic to $L+\sum_{i=1}^n b_i X_i$) is one of the twists produced by the algorithm.

\vspace{.2in}

\section{Curves} \label{section-curve}
\setcounter{Theorem}{0}

In this section we prove the following theorem for curves, as a warm-up to the higher-dimensional case.  This is essentially a rephrasing of results of \cite{caporaso}.

\begin{Theorem} \label{thm-curve}
Let $X$ be a curve of arithmetic genus $g_X$, and let $Y$ be any union of irreducible components of $X$.  If $g_X \geq 2$, then $(X, Y)$ is $K_X^-$-twistable.  If $g_X = 0$, then $(X, Y)$ is $K_X^+$-twistable.  If $g_X=1$, then $(X, Y)$ is neither $H^-$-twistable nor $H^+$-twistable, for any $H$.
\end{Theorem}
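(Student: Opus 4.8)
The plan is to remove the ``for every sufficiently large $m$'' clause of Definition \ref{def-**}, reducing $K_X^{\mp}$-twistability to a system of honest inequalities in the twisting parameter, and then to solve that system by recognizing its left-hand sides as affine-linear functions of that parameter. First I would record that for $d=1$ Proposition \ref{prop-cleaneY} collapses to $e_Y(M) = \chi(X,M)\bigl(\deg_Y M + \tfrac12 k_Y\bigr) - \deg_X M \cdot \chi(Y,M)$, where $k_Y = \deg D$. Substituting $M = L+mH$, applying Riemann--Roch on $X$ and on $Y$, and using adjunction for nodal curves (so that $\omega_X|_Y \cong \omega_Y(D)$ and $\deg_Y(K_X|_Y) = 2g_Y - 2 + k_Y$), a direct expansion gives
\[
e_Y(L+mH) = e_Y(L) + m\bigl(\deg_X H \cdot (\tfrac12 k_Y + g_Y - 1) - \deg_Y(H|_Y)\cdot(g_X - 1)\bigr);
\]
the $m^2$ terms cancel automatically, and for $H = K_X$ the coefficient of $m$ vanishes identically. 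Hence $e_Y(L+mK_X+bY) = e_Y(L+bY)$ for all $m$, so when $g_X \neq 1$, $K_X^-$- (resp.\ $K_X^+$-) twistability of $(X,Y)$ amounts to asking that $\{\,b\in\mathbf{R} : e_Y(L+bY)\le 0 \text{ and } e_Z(L+bY)\le 0\,\}$ (resp.\ with $\le$ replaced by $\ge$) be a unit interval containing an endpoint, for every $L$.

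Next I would treat the cases $g_X \geq 2$ and $g_X = 0$. We may assume $\emptyset \neq Y \subsetneq X$, so that $k_Y \geq 1$ (when $Y=\emptyset$ or $X$ one has $e_Y\equiv e_Z\equiv 0$, a case irrelevant to Theorem \ref{thm-tree}). Since $O_X(Y)$ has degree $0$ on $X$ and degree $-k_Y$ on $Y$, the formula above shows $b \mapsto e_Y(L+bY) = e_Y(L) + b\,k_Y(g_X-1)$ is affine-linear with leading coefficient $k_Y(g_X-1)$, independent of $L$, while Proposition \ref{prop-identity} gives $e_Z(L+bY) = -e_Y(L+(b+1)Y)$. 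If $g_X \geq 2$ this coefficient is positive, so $e_Y(L+bY)\le 0$ defines a ray $b \le b_0$ (with $b_0 = -e_Y(L)/(k_Y(g_X-1))$ its unique root), the condition $e_Z(L+bY)\le 0$ defines the ray $b \ge b_0 - 1$, and their intersection is the closed unit interval $[b_0 - 1,\,b_0]$; thus $(X,Y)$ is $K_X^-$-twistable, while the reversed system has empty solution set, consistent with the theorem making no $K_X^+$ claim. If $g_X = 0$ the coefficient is $-k_Y < 0$, and the same computation with all inequalities reversed again produces a closed unit interval, giving $K_X^+$-twistability. (For $g_X \geq 2$ the inequality $e_Y(L+bY)\le 0$ is exactly Caporaso's balanced condition from Remark \ref{remark-caporaso}, so this step merely repackages results of \cite{caporaso}.)

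Finally, for $g_X = 1$ the leading coefficient $k_Y(g_X-1)$ in $b$ is zero, so $e_Y(L+bY)$, being of degree $\le 1$ in $b$, is constant in $b$ and equals $e_Y(L)$; by Proposition \ref{prop-identity} so is $e_Z(L+bY) = -e_Y(L)$. For an arbitrary line bundle $H$ the same constancy in $b$ holds with $L$ replaced by $L+mH$, and by the first paragraph $m \mapsto e_Y(L+mH)$ is affine-linear. The $H^-$-system then forces $e_Y(L+mH)\le 0$ and $-e_Y(L+mH)\le 0$, i.e.\ $e_Y(L+mH)=0$, for all large $m$; since a nonzero affine function of $m$ is nonzero for $m\gg 0$, this holds for every $b$ when $m\mapsto e_Y(L+mH)$ is identically zero (solution set $\mathbf{R}$) and for no $b$ otherwise (solution set $\emptyset$). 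Neither set is a unit interval containing an endpoint, so $(X,Y)$ is not $H^-$-twistable; the $H^+$-case is identical, and $H$ was arbitrary.

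I expect the only real content to be the vanishing of the $m$-coefficient for $H = K_X$ in the first step---which is precisely the adjunction identity $\omega_X|_Y \cong \omega_Y(D)$ for nodal curves---together with the observation that in genus $1$ this coefficient structure degenerates so badly ($e_Y$ loses all dependence on $b$) that no auxiliary polarization $H$ can restore the unit-interval behavior. The remaining computations are routine; the main bookkeeping hazard is fixing the convention for $\chi(Y,-)$ and the arithmetic genus of a possibly disconnected $Y$, but this never affects the arguments, since only $g_X$ and $k_Y \geq 1$ enter the decisive coefficient $k_Y(g_X-1)$.
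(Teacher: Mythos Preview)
Your proof is correct and follows essentially the same approach as the paper's: both compute that $e_Y(L+mH+bY)$ is affine-linear in $(b,m)$ with $b$-coefficient $k_Y(g_X-1)$ (the paper writes this as $[Y^2](-g_X+1)$), use Proposition~\ref{prop-identity} to turn the two inequalities into a unit-interval condition, and verify that the $m$-coefficient vanishes for $H=K_X$ via adjunction. The only difference is organizational---you first eliminate the $m$-dependence and then solve in $b$, whereas the paper writes $Ab+Bm+C$ at once and checks $B=0$ afterward---and you are slightly more explicit than the paper in handling the trivial cases $Y=\emptyset,X$ and in spelling out why the genus-$1$ solution set is always $\mathbf{R}$ or $\emptyset$.
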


\begin{Remark}
Any nodal curve has a smooth deformation (alternatively, the criterion of Remark \ref{remark-dual} holds trivially for curves), so Proposition \ref{prop-identity} applies automatically.
\end{Remark}

We start with some general observations.  Let $X$ and $Y$ be as in Theorem \ref{thm-curve}, let $Z = \overline{X \backslash Y}$, and let $H$ and $L$ be arbitrary line bundles on $X$.  To check whether $(X,Y)$ is $H^-$- or $H^+$-twistable, we need to understand when the functions $e_Y(L+mH+bY)$ and $e_Z(L+mH+bY)$ have the same sign.  So we calculate them explicitly: letting $g_Y$ denote the genus of $Y$, we have
\begin{align*} 
e_Y(L+mH+bY) &= Ab+Bm+C
\end{align*}
where
\begin{align*}
A &= [Y^2](-g_X+1) \text{,}
\\
B &= [HY](-g_X+1) + [HX](g_Y-1-\frac12 [Y^2]) \text{,}
\\
C &= [LY](-g_X+1) + [LX](g_Y-1)-\frac12[Y^2]([LX]-g_X+1) \text{.}
\end{align*}
This is a linear function of $b$, so for each fixed $m$ we have the following three possibilities:

\begin{itemize}
\item Case (i): $A>0$.  In this case, let $t_m = -\frac{Bm+C}{A}$.  By Proposition \ref{prop-identity}, $e_Z(L+mH+bY) = -e_Y(L+mH+(b+1)Y)$, so if $b \in [t_m-1,t_m]$, then $e_Y(L+mH+bY) \leq 0$ and $e_Z(L+mH+bY) \leq 0$.  For all other values of $b$, $e_Y(L+mH+bY)$ and $e_Z(L+mH+bY)$ have opposite signs; in particular, they are never simultaneously positive.

\item Case (ii): $A=0$.  In this case, both $e_Y$ and $e_Z$ are constant with respect to $b$.

\item Case (iii): $A<0$.  In this case, again let $t_m = -\frac{Bm+C}{A}$.  Then Proposition \ref{prop-identity} implies that if $b \in [t_m-1,t_m]$, then $e_Y(L+mH+bY) \geq 0$ and $e_Z(L+mH+bY) \geq 0$.  For all other values of $b$, $e_Y(L+mH+bY)$ and $e_Z(L+mH+bY)$ have opposite signs; in particular, they are never simultaneously negative.
\end{itemize}

\begin{Observation} \label{obs-genus}
Since $-[Y^2] = [YZ]$ is the number of points in which $Y$ intersects $Z$, we have $[Y^2] < 0$.  Therefore Case (i) occurs when $g_X \geq 2$, Case (ii) when $g_X=1$, and Case (iii) when $g_X=0$.  Note that which case occurs does not depend on $L$, $H$, $m$, or $Y$, only on $X$.  (By contrast, the analogous case breakdown for higher-dimensional varieties depend on all four of these inputs as well as on $X$.)
\end{Observation}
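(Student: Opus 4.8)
The plan is to verify the three assertions packaged into the Observation in order: first that $[Y^2] < 0$, then that the sign of the leading coefficient $A = [Y^2](-g_X+1)$ of the linear function $e_Y(L+mH+bY)$ in $b$ is governed entirely by $g_X$, and finally that this sign---and hence the case dichotomy---is insensitive to the remaining data $L$, $H$, $m$, $Y$.

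First I would pin down the intersection number $[Y^2]$. Here $X$ is a nodal curve, which (as the preceding Remark notes) always admits a smooth deformation $\mathfrak{X} \rightarrow S$, so Observation \ref{obs-2} applies: in the Chow ring of $\mathfrak{X}$ we have $D = YZ = -Y^2$, where $D = Y \cap Z$ and $Z = \overline{X \backslash Y}$. Since $d=1$, the divisor $D$ is $0$-dimensional, namely the finite reduced set of nodes at which $Y$ meets $Z$, so $[D]$ is literally the number of such points. This gives $-[Y^2] = [YZ] = \#(Y \cap Z)$, which is the first clause of the Observation. To upgrade this to the strict inequality $[Y^2] < 0$ I would invoke connectedness of $X$ (part of the standing definition of a variety): for a proper nonempty union $Y$ of components, if $Y$ did not meet $Z$ then $X = Y \sqcup Z$ would be disconnected, a contradiction. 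Hence $\#(Y \cap Z) \geq 1$ and $[Y^2] \leq -1 < 0$.

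With $[Y^2] < 0$ in hand, the case breakdown is a pure sign computation on $A = [Y^2](-g_X+1) = [Y^2](1-g_X)$. Since $[Y^2]$ is strictly negative, the sign of $A$ is opposite to that of $1-g_X$, i.e. the same as that of $g_X-1$. Recalling that the arithmetic genus $g_X = h^1(O_X) \geq 0$ is a nonnegative integer for a connected curve, the three possibilities $g_X = 0$, $g_X = 1$, $g_X \geq 2$ are exhaustive, and we read off $A > 0$ exactly when $g_X \geq 2$ (Case (i)), $A = 0$ exactly when $g_X = 1$ (Case (ii)), and $A < 0$ exactly when $g_X = 0$ (Case (iii)). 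This is the content of the ``Therefore.''

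Finally, for the independence claim I would simply observe that the trichotomy is detected purely by the sign of $A$, and that $A$ depends on none of $L$, $H$, or $m$ (these enter only the coefficients $B$ and $C$ of the linear function). The dependence of $A$ on $Y$ is confined to the magnitude $\lvert [Y^2] \rvert = \#(Y \cap Z)$, which by the first step is always a positive integer and therefore never affects the sign of $A$; the genus $g_X$ is an invariant of $X$ alone. Thus which of the three cases occurs is a function of $g_X$ only, as claimed. There is no serious obstacle here: the whole Observation reduces to the single geometric input that $-[Y^2]$ is a positive point-count, everything downstream being bookkeeping of signs. The one point warranting genuine care is precisely the appeal to connectedness used to exclude $\#(Y \cap Z) = 0$---it is this that separates the strict inequality $[Y^2] < 0$ (needed to make the trichotomy sharp) from the weaker $[Y^2] \leq 0$.
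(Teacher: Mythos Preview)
Your proposal is correct and follows the same reasoning the paper intends. The paper does not supply a separate proof for this Observation: it is meant to be read off directly from the displayed formula $A = [Y^2](-g_X+1)$ together with the standing assumption that $X$ is connected, and your write-up simply unpacks those two ingredients (including the connectedness argument for strictness of $[Y^2]<0$) exactly as expected.
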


\begin{Observation} \label{obs-B=0}
A pair $(X,Y)$ is $H^-$-twistable (resp. $H^+$-twistable) if and only if Case (i) (resp. Case (iii)) occurs and $\lim_{m \rightarrow \infty} t_m$ is finite.  The latter occurs if and only if $B=0$ (in which case $t_m = -\frac{C}{A}$ for every $m$), and whether $B=0$ depends only on $H$.
\end{Observation}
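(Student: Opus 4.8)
The plan is to work directly with the explicit expression $e_Y(L+mH+bY)=Ab+Bm+C$ and the relation $e_Z(L+mH+bY)=-e_Y(L+mH+(b+1)Y)$ furnished by Proposition \ref{prop-identity}, and to determine, for each fixed line bundle $L$, the set $S_L$ of real $b$ for which both $e_Y(L+mH+bY)\le 0$ and $e_Z(L+mH+bY)\le 0$ hold for every sufficiently large $m$. Since $e_Y$ is affine in $b$ and $e_Z(b)=-e_Y(b+1)$, the solution set of the pair of inequalities for a single fixed $m$ can be read off immediately: it is the closed unit interval $[t_m-1,t_m]$ in Case (i), and is empty in Cases (ii) and (iii), exactly as recorded in the case breakdown preceding the Observation. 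Twistability for the ``$\le 0$'' system therefore reduces to understanding $\{b:\ b\in[t_m-1,t_m]\ \text{for all large } m\}=\bigcup_{M}\bigcap_{m\ge M}[t_m-1,t_m]$.

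Two preliminary facts will fall out of the formulas. First, $B$ is built only from $[HY]$, $[HX]$ and the fixed data $g_X$, $g_Y$, $[Y^2]$ of the pair $(X,Y)$, with no term involving $L$; hence the condition $B=0$ depends only on $H$. Second, in Cases (i) and (iii) we have $A\ne 0$, so $t_m=-\frac{Bm+C}{A}$ is affine in $m$ with slope $-B/A$, and its limit as $m\to\infty$ is finite precisely when the slope vanishes, i.e. when $B=0$, in which case $t_m=-C/A$ for every $m$. This settles the two parenthetical clauses and reduces the main equivalence to showing that $H^-$-twistability holds if and only if Case (i) occurs and $B=0$.

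For the main equivalence I would run through the cases. If Case (i) holds with $B=0$, then for every $L$ the interval $[t_m-1,t_m]$ equals the single closed unit interval $[-C/A-1,\,-C/A]$ for all $m$, so $S_L$ is exactly this interval, which contains both endpoints; thus $(X,Y)$ is $H^-$-twistable. If Case (i) holds with $B\ne 0$, then $t_m\to\pm\infty$, and a fixed $b$ can lie in $[t_m-1,t_m]$ for all large $m$ only if $t_m$ stays bounded, which it does not; hence $S_L=\emptyset$ for every $L$ and $(X,Y)$ is not $H^-$-twistable. In Case (iii) the single-$m$ solution set of the ``$\le 0$'' system is already empty, so $S_L=\emptyset$; in Case (ii) the two functions are constant in $b$ and the system forces $Bm+C=0$, which cannot hold for all large $m$ unless $B=C=0$, so $S_L$ is either empty or all of $\mathbf{R}$. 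In none of these subcases is $S_L$ a unit interval, so $(X,Y)$ is not $H^-$-twistable. The $H^+$-statement then follows by the identical argument with all inequalities reversed and the roles of Cases (i) and (iii) interchanged.

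The computation itself is routine; the two points deserving care are the handling of the ``for all sufficiently large $m$'' quantifier and the uniformity over $L$. For the quantifier, the key observation is that when $B\ne 0$ the sliding intervals $[t_m-1,t_m]$ eventually move past any fixed $b$, so the total set is empty even though consecutive intervals may overlap. For uniformity, note that $B$ is independent of $L$ while $C$ enters only as the translation $t_m=-\frac{Bm+C}{A}$; hence the qualitative outcome---a fixed closed unit interval when $B=0$, versus intervals escaping to infinity when $B\ne 0$---is the same for every $L$, which is precisely what the definition of twistability (quantified over all $L$) requires. Thus the only plausible obstacle, that some exceptional $L$ might spoil the conclusion, does not arise.
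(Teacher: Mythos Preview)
Your argument is correct and is exactly the intended one: the paper treats this Observation as immediate from the preceding case breakdown (Cases (i)--(iii)) together with Proposition~\ref{prop-identity}, and you have simply written out that verification in detail, including the check that in Case~(iii) the ``$\le 0$'' system has empty solution set for each $m$ and that in Case~(ii) the set $S_L$ is either $\emptyset$ or $\mathbf{R}$. There is no substantive difference between your approach and the paper's.
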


\begin{Remark} \label{remark-curveasymptotic}
In particular, if $g_X \geq 2$ or $g_X=0$, then depending on $H$, either of two possibilities may occur.  If $B=0$ for every $Y$, then a given line bundle $L$ is $H^-$-semistable if and only if $L$ is balanced in the sense of \cite{caporaso} (Definition \ref{def-balanced}).  If $B \neq 0$ for some $Y$, then no line bundle is $H^-$-semistable.  (Similar statements hold for $H^+$-semistability.)  This says that for curves, the line bundle $H$ contributes nothing to the question of semistability.
\end{Remark}

The conclusions of Theorem \ref{thm-curve} follow from Observations \ref{obs-genus} and \ref{obs-B=0} together with the easy calculation that if $H = K_X$, then $B=0$ for any $Y$.

\begin{Remark} \label{remark-curvestrict}
This proof shows that for curves of genus not equal to $1$, the unit interval of Definition \ref{def-**} contains two integers if and only if $-\frac{C}{A}$ is an integer.
\end{Remark}

\vspace{.2in}

\section{Varieties of arbitrary dimension} \label{section-arbitrary}
\setcounter{Theorem}{0}

In this section we prove the following:

\begin{Theorem} \label{thm-dimd}
Let $X$ be a variety of dimension $d$ that has a smooth deformation, and let $Y$ be a union of irreducible components of $X$.  Let $K_X$ denote the canonical bundle of $X$.  If $[K_X^d] [(K_X|_Y)^{d-1}Y^2] < 0$, then $(X,Y)$ is $K_X^-$-twistable.  If $[K_X^d] [(K_X|_Y)^{d-1}Y^2] > 0$, then $(X,Y)$ is $K_X^+$-twistable.
\end{Theorem}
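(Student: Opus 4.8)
The plan is to make everything explicit in terms of the variable $b$, exactly as in the curve case, but now keeping track of the fact that $e_Y(L+mH+bY)$ is a polynomial of degree at most $d$ in $b$ (rather than linear). First I would use Proposition~\ref{prop-identity}, which gives $e_Z(L+mH+bY) = -e_Y(L+mH+(b+1)Y)$, so that the two conditions $e_Y(L+mH+bY)\le 0$ and $e_Z(L+mH+bY)\le 0$ become the single pair of conditions $e_Y(L+mH+bY)\le 0$ and $e_Y(L+mH+(b+1)Y)\ge 0$. Thus the relevant set of $b$ is controlled by the graph of the single polynomial $p_m(b) := e_Y(L+mH+bY)$: we want the locus where $p_m(b)\le 0$ but $p_m(b+1)\ge 0$. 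To prove $K_X^-$-twistability I need to show this locus is a unit interval containing an endpoint, for every $L$ once $m=H$ is large.

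The key computation is to extract the leading behavior of $p_m(b)$ in $b$ and in $m$. Using the form of $e_Y$ in Proposition~\ref{prop-cleaneY}, the top-degree term of $p_m$ in $b$ comes from the highest power $Y^{d+1}$, and after substituting $mH$ for the ambient bundle and extracting the top power of $m$, the dominant term is a constant multiple of $m^d$ times $[K_X^d]\,[(K_X|_Y)^{d-1}Y^2]$ (up to sign and positive combinatorial factors), with lower-order corrections in $m$ and in $b$. Concretely, I expect $p_m(b)$ to be, to leading order in $m$, an \emph{affine} function of $b$ with slope $\sim c\, m^{d}\,[K_X^d]\,[(K_X|_Y)^{d-1}Y^2]$ for some positive constant $c$: the point is that the canonical bundle is chosen precisely so that $\chi(X,K_X+mH)$ and $[(K_X+mH)^dX]$ and the divided-power sum all conspire — as in Remark~\ref{remark-curveasymptotic}, $H$ "contributes nothing" to the sign structure asymptotically — so that the coefficient of $b^2,\dots,b^d$ is of strictly lower order in $m$ than the coefficient of $b^1$. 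This is the heart of the "asymptotically behaves like curves" theme advertised in the introduction.

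Granting that, the argument finishes as in Section~\ref{section-curve}. If $[K_X^d]\,[(K_X|_Y)^{d-1}Y^2]<0$, then for $m\gg 0$ the polynomial $p_m(b)$ is strictly decreasing on the relevant range (its derivative is dominated by the negative leading term), so there is a unique real $t_m$ with $p_m(t_m)=0$, and the locus $\{b: p_m(b)\le 0,\ p_m(b+1)\ge 0\}$ is exactly $[t_m-1,t_m]$, a closed unit interval — containing both endpoints, hence in particular at least one. Moreover one must check $t_m$ does not run off to infinity: I would show $t_m$ converges (or at least stays bounded modulo $\mathbf{Z}$ — in fact the relevant statement is that the \emph{interval} $[t_m-1,t_m]$ stabilizes for large $m$, which is all Definition~\ref{def-**} requires) using that the subleading-in-$m$ terms are genuinely of lower order, so $t_m = -(\text{lower order})/(\text{leading order})$ tends to a finite limit. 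The case $[K_X^d]\,[(K_X|_Y)^{d-1}Y^2]>0$ is symmetric: $p_m$ is increasing for $m\gg0$, the roles of $\le$ and $\ge$ swap, and one gets $K_X^+$-twistability with the same unit interval $[t_m-1,t_m]$.

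The main obstacle is the degree-in-$b$ analysis: one must genuinely verify that every coefficient of $b^k$ for $k\ge 2$ in $p_m(b)$ is $o(m^d)$ while the $b^1$-coefficient is $\asymp m^d$ with the claimed sign. This requires expanding $\chi(Y,K_X+mH+bY)$ and the intersection numbers $[(K_X+mH+bY)^{d+1-j}Y^j]$ in powers of $m$ and $b$ and checking that the would-be top-order-in-$m$ contributions to the $b^k$ coefficients ($k\ge2$) cancel. I expect the cancellation to follow from the same identity-type manipulations used in the proof of Proposition~\ref{prop-identity} (the substitution $(K_X+mH) = ((K_X+mH+Y)-Y)$ and rearrangement of the divided-power series), together with the vanishing $[\,\cdot\,^{d+1-j}X^j]=0$ for $j\ge2$; the choice $H$ with $\pm K_X$ positive guarantees $\chi(X,mH)$ grows like $m^d$ with a sign matching $[K_X^d]$, which is what pins down the overall sign in terms of $[K_X^d][(K_X|_Y)^{d-1}Y^2]$. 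Once the leading coefficient is identified, everything else is the linear-in-$b$ bookkeeping already carried out for curves.
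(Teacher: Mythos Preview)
Your overall strategy matches the paper's: reduce via Proposition~\ref{prop-identity} to the single polynomial $p_m(b)=e_Y(L+mK_X+bY)$, show it is asymptotically linear in $b$ after a suitable rescaling in $m$, and read off a unit interval from its unique nearby root. But several points are off or missing.

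First, two calibration errors. The relevant power of $m$ is $m^{2d-2}$, not $m^d$: writing $e_Y(L+mK_X+bY)=\sum_i \tilde A_i(m)\,b^i$, one has $\deg_m \tilde A_i \le 2d-1-i$ for \emph{every} $H$ (this is Lemma~\ref{lemma-coefficients}, and needs no cancellation special to $K_X$); the role of $H=K_X$ is instead to force $\deg_m \tilde A_0 \le 2d-2$ and $\deg_m \tilde A_1 = 2d-2$ with leading coefficient $-\tfrac{d}{2}[K_X^d][(K_X|_Y)^{d-1}Y^2]$ (Lemma~\ref{lemma-degree}). In particular, under your hypothesis $[K_X^d][(K_X|_Y)^{d-1}Y^2]<0$ the limiting slope $A_1$ is \emph{positive}, so $p_m$ is increasing near its distinguished root, not decreasing; your sign is reversed, though the interval $[t_m-1,t_m]$ survives once this is fixed.

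Second, and more seriously, you do not address why the set required by Definition~\ref{def-**} contains an endpoint. That set is
\[
I=\{b:\ p_m(b)\le 0 \text{ and } p_m(b+1)\ge 0 \text{ for all sufficiently large } m\},
\]
which is \emph{not} simply $[t_m-1,t_m]$ for one large $m$; it is (essentially) the ``lim\,inf'' of those intervals as $m\to\infty$. Even granting $t_m\to s:=-A_0/A_1$, if $t_m$ oscillates around $s$ then $I$ collapses to the open interval $(s-1,s)$ and twistability fails. The paper handles this with an extra argument (Lemma~\ref{lemma-monotone}): since $m^{2d-2}p_m(s)$ is a polynomial in $m$, it eventually has constant sign (or is identically zero), and combined with uniform convergence of $p_m'$ to $A_1$ this forces $t_m$ to lie on a fixed side of $s$ for all large $m$. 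Your sketch (``the interval stabilizes'') does not supply this step. You also need to rule out a second root of $p_m$ lingering in any bounded window (the paper's Lemma~\ref{lemma-diverging}); ``strictly monotone on the relevant range'' gestures at this but is not an argument, since $p_m$ has degree $d$ in $b$ and is only approximately linear on compacta.
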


\begin{Remark}
For example, the quantity $[K_X^d] [(K_X|_Y)^{d-1}Y^2]$ is negative whenever $K_X$ is ample, and positive whenever $K_X^\vee$ is ample.  This is not the only situation of interest, of course; for instance, if $X$ is the special fiber in a blowup as described in Section \ref{subsection-background}, then the restriction of $K_X$ to the `exceptional components' of $X$ may be trivial, yet the hypotheses of Theorem \ref{thm-dimd} may still hold for the values of $Y$ required for the application of Theorem \ref{thm-tree}.
\end{Remark}

Applying Theorem \ref{thm-dimd} to Theorem \ref{thm-tree} yields the following answer to Question \ref{question-twists}:
\begin{Theorem} \label{thm-upshot}
Let $X$ be a variety of dimension $d$ that has a smooth deformation, and assume that its dual graph $\Gamma_X$ is a tree.  Let $K_X$ denote the canonical bundle of $X$.  Suppose that for every union of irreducible components $Y \subset X$ such that both $Y$ and $\overline{X \backslash Y}$ are connected, we have 
\[
[K_X^d] [(K_X|_Y)^{d-1}Y^2] < 0 \text{ (resp. $[K_X^d] [(K_X|_Y)^{d-1}Y^2] > 0$).}
\]  
Then any line bundle $L$ on $X$ has at least one $K_X^-$-semistable (resp. $K_X^+$-semistable) twist, and at most $2^{n-1}$, where $n$ is the number of irreducible components of $X$.
\end{Theorem}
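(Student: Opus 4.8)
The plan is to prove Theorem~\ref{thm-dimd} by explicit computation of the polynomials $e_Y(L+mH+bY)$ and $e_Z(L+mH+bY)$ in the two variables $m$ and $b$, exactly as in the curve case of Section~\ref{section-curve} but tracking higher-degree terms. First I would use Proposition~\ref{prop-cleaneY} to write $e_Y(L+mH+bY)$ as a polynomial in $b$ of degree at most $d$; the crucial observation is that the leading coefficient (the coefficient of $b^d$) depends only on intersection numbers of $Y$ with itself and involves $\chi(X,\cdot)$ and $[(\cdot)^dX]$ evaluated at $L+mH$, so after substituting $H = K_X$ and letting $m \to \infty$ the dominant behavior is controlled by $[K_X^d]$ and $[(K_X|_Y)^{d-1}Y^2]$. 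By Proposition~\ref{prop-identity} we have the key relation $e_Z(L+mH+bY) = -e_Y(L+mH+(b+1)Y)$, so the two inequalities $e_Y \le 0$ and $e_Z \le 0$ are governed by a single polynomial and its shift by one; the set of $b$ satisfying both is sandwiched between two consecutive ``sign-change'' configurations of $e_Y$.

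The heart of the argument is then to show that, for $m$ large, the polynomial $p_m(b) := e_Y(L+mK_X+bK_X\text{-part}+bY)$ — more precisely the relevant one-variable polynomial in $b$ — behaves like the linear case: it should have a unique real root near which it changes sign in the ``correct'' direction, so that $\{b : p_m(b) \le 0 \text{ and } p_m(b+1) \le 0\}$ (when the $b^d$-coefficient gives the $K_X^-$ sign) is a unit-length interval containing an endpoint. The strategy is to isolate the top-degree term, show its coefficient has constant sign for all large $m$ (this is where the hypothesis $[K_X^d][(K_X|_Y)^{d-1}Y^2] < 0$ enters: it pins down the sign of the product of the $b^d$-coefficient of $e_Y$ with the overall normalization $d!\,\chi(X,L+mK_X)$, which for large $m$ is dominated by $[K_X^d]$), and then argue that the lower-order terms in $b$, divided by the top coefficient, have bounded derivative contributions so that $p_m$ is monotone in $b$ on the region of interest. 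Given monotonicity of $p_m$ with the right sign of its leading term, the interval $\{p_m \le 0\} \cap \{p_m(\cdot+1)\le 0\}$ is automatically a closed unit interval (or a point), and one checks it contains an endpoint; the asymptotic quantifier ``for all sufficiently large $m$'' is handled by noting the interval endpoints converge as $m\to\infty$, so the integer solutions stabilize.

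I expect the main obstacle to be precisely the step of controlling the \emph{lower-order} terms in $b$: unlike the curve case, where $e_Y(L+mH+bY)$ is exactly linear in $b$ and the analysis is immediate, here $p_m(b)$ is a genuine degree-$d$ polynomial, and one must rule out spurious sign changes away from the dominant root. The cleanest route is probably to rescale: write $b = $ (dominant root) $+ \beta$ and show that after dividing by $m^{\text{(appropriate power)}}$ the polynomial $p_m$ converges (coefficient-wise, on compact $\beta$-intervals) to a polynomial whose only sign change in the relevant range is the linear one — in effect formalizing the phrase from the introduction that ``their behavior in terms of stability asymptotically approaches that of line bundles on curves.'' A secondary, more bookkeeping-heavy obstacle is verifying that the sign of the product of the leading $b$-coefficient of $e_Y$ with $d!\,\chi(X,L+mK_X)$ is, for $m \gg 0$, exactly that of $[K_X^d][(K_X|_Y)^{d-1}Y^2]$; this requires expanding $\chi(X,L+mK_X)$ via Riemann--Roch (or Snapper's polynomial), extracting that its leading term in $m$ is $\tfrac{m^d}{d!}[K_X^d]$, and matching it against the corresponding leading term of the $b^d$-coefficient, which by Proposition~\ref{prop-cleaneY} unwinds to $-\tfrac{1}{d+1}\binom{d+1}{d}(-1)^{d-1}[(mK_X)^{d-1}Y^{d+1}]$-type contributions reducing to $[(K_X|_Y)^{d-1}Y^2]$ after applying $Y^j$-relations from Observation~\ref{obs-2}. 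Once these sign computations and the asymptotic monotonicity are in hand, Theorem~\ref{thm-upshot} follows formally by feeding Theorem~\ref{thm-dimd} into Theorem~\ref{thm-tree}.
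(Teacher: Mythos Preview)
Your overall architecture is right: reduce Theorem~\ref{thm-upshot} to Theorem~\ref{thm-dimd}, then feed that into Theorem~\ref{thm-tree}; and prove Theorem~\ref{thm-dimd} by showing that a suitable rescaling of $e_Y(L+mK_X+bY)$ converges, as $m\to\infty$, to a \emph{linear} polynomial in $b$ whose unique root governs the unit interval via Proposition~\ref{prop-identity}. But you have misidentified \emph{which} coefficient in $b$ the hypothesis controls, and this breaks the argument. Write $e_Y(L+mK_X+bY) = \sum_{i=0}^d \tilde A_i(m)\, b^i$. Since $\chi(X,\cdot)$ and $[(\cdot)^d X]$ are invariant under twisting by $Y$ (because $Y\cdot X=0$ in the Chow ring of $\mathfrak{X}$), a direct computation gives $\tilde A_d(m) = [Y^{d+1}]\cdot\bigl(d!\,\chi(X,L+mK_X)-[(L+mK_X)^d X]\bigr)$, which has degree $d-1$ in $m$ and leading term proportional to $[K_X^d][Y^{d+1}]$, \emph{not} $[K_X^d][(K_X|_Y)^{d-1}Y^2]$. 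More generally $\deg_m \tilde A_i \le 2d-1-i$ (Lemma~\ref{lemma-coefficients}), so on any bounded $b$-interval the behaviour for large $m$ is governed by the \emph{low}-degree-in-$b$ terms, not the top one. The hypothesis $[K_X^d][(K_X|_Y)^{d-1}Y^2]\ne 0$ is exactly the nonvanishing of the $m^{2d-2}$-coefficient of $\tilde A_1(m)$; after dividing by $m^{2d-2}$, the polynomials converge to $A_1 b + A_0$ with $A_1\ne 0$, and it is the sign of $A_1$ that determines $K_X^-$- versus $K_X^+$-twistability. Your ``monotonicity via the $b^d$-coefficient'' strategy cannot work, since that coefficient vanishes after the correct rescaling.

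There is also a second ingredient your proposal omits. For the limiting root $s=-A_0/A_1$ to be finite one needs $\deg_m \tilde A_0 \le 2d-2$, whereas the a priori bound from Lemma~\ref{lemma-coefficients} is only $2d-1$. The drop by one is a genuine cancellation that occurs precisely when $H=K_X$: expanding $\chi(X,\cdot)$ and $\chi(Y,\cdot)$ via Hirzebruch--Riemann--Roch and using the adjunction relation $K_X|_Y = K_Y - Y^2$, the would-be $m^{2d-1}$ term of $\tilde A_0$ collapses (this is Lemma~\ref{lemma-degree}). Without this, the relevant root of $p_m$ drifts to $\pm\infty$ with $m$, no recentering ``$b=(\text{dominant root})+\beta$'' stabilises the integer solutions, and $(X,Y)$ fails to be $K_X^\pm$-twistable. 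Once you redirect the computation to $\tilde A_1$ and establish this cancellation in $\tilde A_0$, the rest of your sketch (uniform convergence on compacta, a single bounded root, the unit-interval conclusion) goes through essentially as in the paper.
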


\vspace{.2in}

\subsection{Proof of Theorem \ref{thm-dimd}}

The proof of Theorem \ref{thm-dimd} rests on the following observations, which will be proved in Section \ref{subsection-lemmas}.  First, let $H$ be any line bundle on $X$.  Let us write
\begin{align*}
e_Y(L+mH+aY) 
&= \tilde{A}_d(m) a^d + \tilde{A}_{d-1}(m) a^{d-1} + \ldots + \tilde{A}_1(m) a + \tilde{A}_0(m) \text{,}
\end{align*}
where for each $i$, $\tilde{A}_i(m)$ is a polynomial in $m$.  For each $m$, let $r_1(m), \ldots, r_{d_m}(m)$ denote the roots of $e_Y(L+mH+aY)$ (viewed as a polynomial in $a$, of degree $d_m \leq d$), counted with multiplicity; we fix this numbering of the roots for the rest of this section.

\begin{Lemma} \label{lemma-coefficients}
For each $i$, $\tilde{A}_i(m)$ has degree at most $2d-1-i$ in $m$.
\end{Lemma}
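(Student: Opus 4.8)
The plan is to read off the degree bound directly from the formula for $e_Y$ given in Proposition \ref{prop-cleaneY}, treating $L+mH+aY$ as a line bundle depending polynomially on the two parameters $m$ and $a$. First I would substitute $\mathcal{L} := L + mH + aY$ (as a class in the Chow ring of the total space $\mathfrak{X}$ of a smooth deformation) into the clean formula
\[
e_Y(\mathcal{L}) = d!\left( \chi(X,\mathcal{L}) \cdot \frac{1}{d+1}\big([\mathcal{L}^{d+1}] - [(\mathcal{L}-Y)^{d+1}]\big) - [\mathcal{L}^d X]\cdot \chi(Y,\mathcal{L})\right),
\]
and expand everything as a polynomial in $m$ and $a$, with the aim of showing the coefficient of $a^i$ has $m$-degree at most $2d-1-i$.

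The key steps are as follows. First, by asymptotic Riemann--Roch (Snapper's theorem), $\chi(X, L+mH+aY)$ is a polynomial in $m$ and $a$ of total degree at most $d$, and likewise $\chi(Y, L+mH+aY)$ has total degree at most $d$ in $(m,a)$; in particular the coefficient of $a^i$ in each Euler characteristic has $m$-degree at most $d-i$. Second, the intersection number $[\mathcal{L}^d X]$ is a polynomial in $m$ and $a$ of total degree exactly $d$, so its $a^i$-coefficient has $m$-degree at most $d-i$; hence the product $[\mathcal{L}^d X]\cdot\chi(Y,\mathcal{L})$ has, in its $a^i$-coefficient, $m$-degree at most $2d-i$ — but one checks the top-degree-in-$a$ terms cancel suitably, or more simply one observes the total $(m,a)$-degree of this product is at most $2d$, so the $a^i$-coefficient has $m$-degree at most $2d-i$; this already is within the claimed bound except possibly at $i=0$, which I address below. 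Third, and most importantly, I would analyze the term $\chi(X,\mathcal{L})\cdot\frac{1}{d+1}\big([\mathcal{L}^{d+1}]-[(\mathcal{L}-Y)^{d+1}]\big)$: the difference $[\mathcal{L}^{d+1}]-[(\mathcal{L}-Y)^{d+1}] = \sum_{k=1}^{d+1}\binom{d+1}{k}(-1)^{k-1}[\mathcal{L}^{d+1-k}Y^k]$ is supported on $X$ (it involves at least one factor of $Y$), and since $Y^2$ is a cycle supported on $D\subset X$, the term with $Y^k$ contributes a nonzero intersection number only when the remaining factors of $\mathcal{L}=L+mH+aY$ do not produce further powers of $Y$ beyond what the geometry allows; the upshot is that this difference is a polynomial in $m,a$ of total degree at most $d$ (it behaves like a "derivative" of $[\mathcal{L}^{d+1}]$ in the $Y$-direction, losing one degree), so its $a^i$-coefficient has $m$-degree at most $d-i$, and multiplying by $\chi(X,\mathcal{L})$ (total degree $\leq d$) gives $a^i$-coefficient with $m$-degree at most $2d-i$.

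To get the sharper bound $2d-1-i$ rather than $2d-i$, the crucial point — and the step I expect to be the main obstacle — is that the degree-$2d$-in-$(m,a)$ parts of the two products must cancel in the difference $e_Y$. Heuristically this is exactly the phenomenon already visible in the curve case (Section \ref{section-curve}), where $e_Y(L+mH+bY)$ turned out to be genuinely linear, not quadratic, in the combined variables; structurally it reflects that the leading asymptotics of $\chi$ and of the degree $[\mathcal{L}^dX]$ are governed by the same top intersection class, so the combination $\chi(X,\mathcal{L})\cdot(\text{degree-}d\text{ difference}) - [\mathcal{L}^dX]\cdot\chi(Y,\mathcal{L})$ has its top-order terms (those of $(m,a)$-degree $2d$) vanish identically. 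I would make this precise by writing out the leading terms of each factor via asymptotic Riemann--Roch — $\chi(X,\mathcal{L}) = \frac{1}{d!}[\mathcal{L}^d X] + (\text{lower order})$ and $\chi(Y,\mathcal{L}) = \frac{1}{d!}[(\mathcal{L}|_Y)^d Y] + (\text{lower order})$, together with $\frac{1}{d+1}([\mathcal{L}^{d+1}]-[(\mathcal{L}-Y)^{d+1}]) = [(\mathcal{L}|_Y)^d Y] + (\text{lower order in }m,a)$ — and checking that the degree-$2d$ contribution is $\frac{1}{d!}[\mathcal{L}^dX][(\mathcal{L}|_Y)^dY] - [\mathcal{L}^dX]\cdot\frac{1}{d!}[(\mathcal{L}|_Y)^dY] = 0$. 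Once this cancellation is established, every surviving monomial $m^p a^i$ in $e_Y$ satisfies $p+i \leq 2d-1$, which gives $p \leq 2d-1-i$ as claimed.
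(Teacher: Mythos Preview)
Your proposal is correct and is essentially the paper's argument: the paper makes the same leading-term cancellation explicit by setting $q_X = d!\chi(X,\cdot) - [(\cdot)^d X]$ and $q_Y = d!\chi(Y,\cdot) - [(\cdot)^d Y]$ (your Riemann--Roch remainders, up to $d!$) and rewriting $e_Y$ as a sum of three products, each visibly of total $(m,a)$-degree at most $2d-1$. One small slip in your exposition: the intermediate claim that the preliminary bound $2d-i$ is ``within the claimed bound except possibly at $i=0$'' is off (it exceeds $2d-1-i$ for every $i$), but this is irrelevant once you carry out the cancellation in your final paragraph.
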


If we take $H=K_X$, then a direct calculation shows that more is true:

\begin{Lemma} \label{lemma-degree}
When $H = K_X$, we have $\deg \tilde{A}_0(m) \leq 2d-2$.  Furthermore, if $H=K_X$ and $[K_X^d] [(K_X|_Y)^{d-1}Y^2] \neq 0$, we have $\deg \tilde{A}_1(m) \geq 2d-2$.
\end{Lemma}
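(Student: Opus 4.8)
The plan is to reduce each assertion to the (non)vanishing of a single coefficient of a polynomial in $m$ and then to compute that coefficient by expanding $e_Y$ via Riemann--Roch. Write $N = L + mK_X$. For the first assertion, $\tilde A_0(m) = e_Y(N)$, and Lemma \ref{lemma-coefficients} already gives $\deg\tilde A_0 \le 2d-1$, so it suffices to show the coefficient of $m^{2d-1}$ vanishes when $H = K_X$. For the second, $\tilde A_1(m) = \partial_a e_Y(N+aY)\big|_{a=0}$, which has degree $\le 2d-2$ by Lemma \ref{lemma-coefficients}; I would compute its coefficient of $m^{2d-2}$ and show it equals a nonzero rational multiple of $[K_X^d]\,[(K_X|_Y)^{d-1}Y^2]$ (namely $-\tfrac{d}{2}$ times it), so that $\deg\tilde A_1 \ge 2d-2$ exactly when that product is nonzero.

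The common tools are: the clean form of $e_Y$ in Proposition \ref{prop-cleaneY}; the relations $Y^2 = Z^2 = -D = YZ$ of Observation \ref{obs-2}, which yield $Y\cdot X = Y^2 + YZ = 0$ in the Chow ring of $\mathfrak{X}$; adjunction $K_Y = (K_\mathfrak{X}+Y)|_Y = K_X|_Y + O_\mathfrak{X}(Y)|_Y$ (using $K_\mathfrak{X}|_X = K_X$, since $O_\mathfrak{X}(X)|_X$ is trivial); and Riemann--Roch, applied keeping only the top two terms in the relevant line bundle, since lower Todd-class terms contribute lower powers of $m$. As $X$ is merely SNC, Euler characteristics are expanded through $\chi(X,\,\cdot\,) = \chi(Y,\,\cdot\,) + \chi(Z,\,\cdot\,) - \chi(D,\,\cdot\,)$; the singular corrections to the Todd class cancel among these summands at the orders that matter, so effectively $\chi(X,\ell)$ may be expanded as if $X$ were smooth with $-\tfrac{1}{2}K_X$ in codimension one. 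One must also remember that $L$ need not extend to $\mathfrak{X}$, so the Euler-characteristic terms should be handled intrinsically rather than via pullback.

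For the first assertion I expand $e_Y(N) = d!\big(\chi(X,N)\,S(N) - [N^d X]\,\chi(Y,N)\big)$, with $S(N) = \tfrac{1}{d+1}\sum_{j\ge1}\binom{d+1}{j}(-1)^{j-1}[N^{d+1-j}Y^j]$, through order $m^{2d-1}$. The $m^{2d}$-coefficient cancels for any $H$ (this is the $i=0$ case of Lemma \ref{lemma-coefficients}); for the $m^{2d-1}$-coefficient one uses adjunction to rewrite $[(K_X|_Y)^{d-1}K_Y]$ in terms of $[(K_X|_Y)^d]$ and $[(K_X|_Y)^{d-1}Y^2]$, after which the remaining terms cancel precisely because with $H = K_X$ the twisting bundle is the one that reappears in the Todd correction and in adjunction. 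For the second assertion, $Y\cdot X = 0$ shows both that $[(N+aY)^d X]$ is independent of $a$ and that $\partial_a\chi(X,(N+aY)|_X)\big|_{a=0} = 0$, so $\tilde A_1(m) = d!\big(\chi(X,N)\,\partial_a S(N+aY)|_0 - [N^d X]\,\partial_a\chi(Y,(N+aY)|_Y)|_0\big)$. Here $\partial_a S(N+aY)|_0 = [N^d Y] - [(N-Y)^d Y]$ has degree $\le d-1$ with leading coefficient $d\,[(K_X|_Y)^{d-1}Y^2]$, and $\partial_a\chi(Y,(N+aY)|_Y)|_0$ has degree $\le d-1$ with leading coefficient $\tfrac{1}{(d-1)!}[(K_X|_Y)^{d-1}Y^2]$ and a sub-leading coefficient that Riemann--Roch and adjunction on the smooth variety $Y$ express in terms of $[(K_X|_Y)^{d-1}Y^2]$, $[(K_X|_Y)^{d-2}LY^2]$ and $[(K_X|_Y)^{d-2}Y^3]$. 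Multiplying these out against the Riemann--Roch expansions of $\chi(X,N)$ and the exact expansion of $[N^dX]$ and collecting, the $m^{2d-1}$-coefficient vanishes and the $m^{2d-2}$-coefficient collapses to $-\tfrac{d}{2}\,[K_X^d]\,[(K_X|_Y)^{d-1}Y^2]$.

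The main obstacle will be organizational: tracking the degree in $m$ of every term, truncating the Riemann--Roch expansions at exactly the right order, and carrying out the adjunction substitutions cleanly enough that the cancellations become visible. A smaller point requiring care is the Riemann--Roch bookkeeping on the singular $X$ via its $Y$, $Z$, $D$ decomposition, together with identifications of cycles (for instance $[(O_\mathfrak{X}(Y)|_Y)^2]$ with $[Y^3]$) in the sense of Section \ref{subsection-smoothdeformation}.
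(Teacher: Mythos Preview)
Your proposal is correct and follows essentially the same approach as the paper: both apply Hirzebruch--Riemann--Roch to expand $\chi(X,\cdot)$ and $\chi(Y,\cdot)$ through the top two terms, substitute into $e_Y$, and then use adjunction $K_X|_Y = K_Y - Y^2$ to see that with $H=K_X$ the coefficient of $m^{2d-1}$ in $\tilde A_0$ vanishes while the coefficient of $m^{2d-2}$ in $\tilde A_1$ is $-\tfrac{d}{2}[K_X^d][(K_X|_Y)^{d-1}Y^2]$. The paper is terser (it simply writes the HRR expansion for $X$ directly and substitutes), whereas you are more explicit about why $\chi(X,\cdot)$ and $[\,\cdot\,^d X]$ are independent of $a$ and about how Riemann--Roch on the singular $X$ is justified via the $Y,Z,D$ decomposition; these elaborations are sound but not a different argument.
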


Motivated by these lemmas, let us set $p_m(a) = \frac{1}{m^{2d-2}} e_Y(L+mH+aY)$ (this of course has the same roots as $e_Y(L+mH+aY)$), and let $A_i(m) = \frac{1}{m^{2d-2}} \tilde{A}_i(m)$ for each $i$.  Let $A_i = \lim_{m \rightarrow \infty} A_i(m)$.  By Lemma \ref{lemma-coefficients}, $A_i=0$ for each $i \geq 2$.  Moreover, if $H=K_X$ and $[K_X^d] [(K_X|_Y)^{d-1}Y^2] \neq 0$, then by Lemma \ref{lemma-degree}, $A_0$ is finite and $A_1 \neq 0$.  In this situation, let $p(a) = A_1 a + A_0$, and let $s = -\frac{A_0}{A_1}$.

The linear polynomial $p(a)$ plays the same role as the linear polynomial $Ab+C$ of Section \ref{section-curve}.  Roughly speaking, for large $m$, $p_m(a)$ will behave like $p(a)$: there will be a unit interval comprising the simultaneous solutions to $p_m(a) \leq 0$ and $p_m(a+1) \geq 0$ (which are precisely the simultaneous solutions to $e_Y(L+mH+aY) \leq 0$ and $e_Z(L+mH+aY) \leq 0$, because of Proposition \ref{prop-identity}), and as $m$ grows the sequence of unit intervals will converge.  We can detect this behavior in the fact that there is essentially a unique sequence of roots of the polynomials $p_m(a)$ converging to $s$, while all the other roots grow unboundedly with $m$.

Lemmas \ref{lemma-uniformconvergence}-\ref{lemma-diverging} make these remarks precise.  For all of them, we use the notation above, taking $H=K_X$ and assuming $[K_X^d] [(K_X|_Y)^{d-1}Y^2] \neq 0$.

\begin{Lemma} \label{lemma-uniformconvergence}
The polynomials $p_m(a)$ converge uniformly to $p(a)$ on bounded subsets of $\mathbf{R}$.
\end{Lemma}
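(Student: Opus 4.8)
The plan is to reduce the claim to the fact that each coefficient function $A_i(m)$ converges to a finite limit $A_i$ as $m \to \infty$, and then invoke the elementary principle that polynomials of bounded degree with convergent coefficient sequences converge uniformly on bounded sets. First I would recall from the setup that $p_m(a) = \sum_{i=0}^{d} A_i(m) a^i$ where $A_i(m) = m^{-(2d-2)} \tilde{A}_i(m)$. By Lemma \ref{lemma-coefficients}, $\tilde{A}_i(m)$ has degree at most $2d-1-i \leq 2d-2$ in $m$ for every $i \geq 1$, so $A_i(m)$ converges to a finite limit $A_i$ (in fact $A_i = 0$ for $i \geq 2$, and $A_i = 0$ for $i=1$ unless $\deg \tilde A_1 = 2d-2$, which by Lemma \ref{lemma-degree} is exactly the generic situation under our hypothesis $[K_X^d][(K_X|_Y)^{d-1}Y^2] \neq 0$). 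By the first part of Lemma \ref{lemma-degree}, $\deg \tilde A_0(m) \leq 2d-2$ as well, so $A_0(m)$ converges to a finite limit $A_0$. Hence $p_m(a) - p(a) = \sum_{i=0}^{d} \bigl(A_i(m) - A_i\bigr) a^i$, where each $A_i = 0$ for $i \geq 2$, $A_1$ and $A_0$ are the finite limits just described, and $p(a) = A_1 a + A_0$.

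Now fix a bound $R > 0$ and consider $a$ with $|a| \leq R$. Then
\[
|p_m(a) - p(a)| \;\leq\; \sum_{i=0}^{d} |A_i(m) - A_i| \cdot |a|^i \;\leq\; \sum_{i=0}^{d} |A_i(m) - A_i| \cdot \max(1, R^i) \;\leq\; \left(\sum_{i=0}^{d} |A_i(m) - A_i|\right) \max(1, R^d) \text{.}
\]
Since each of the finitely many terms $|A_i(m) - A_i|$ tends to $0$ as $m \to \infty$, the right-hand side tends to $0$ independently of $a$; given $\varepsilon > 0$, choosing $m$ large enough that $\sum_i |A_i(m) - A_i| < \varepsilon / \max(1, R^d)$ makes $|p_m(a) - p(a)| < \varepsilon$ for all $|a| \leq R$ simultaneously. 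This is precisely uniform convergence on the bounded set $\{|a| \leq R\}$, and since $R$ was arbitrary the lemma follows.

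The only substantive input is Lemmas \ref{lemma-coefficients} and \ref{lemma-degree}; everything after that is the routine estimate above. So there is no real obstacle here — the lemma is essentially a formal consequence of the degree bounds on the coefficients, recorded separately because it is the cleanest way to package what is needed for the subsequent root-tracking arguments (Lemmas \ref{lemma-uniformconvergence} through \ref{lemma-diverging}). If one wanted to be even more economical, one could simply remark that $p_m \to p$ coefficientwise and that coefficientwise convergence of polynomials of uniformly bounded degree is equivalent to uniform convergence on compacta; but writing out the one-line estimate makes the paper self-contained.
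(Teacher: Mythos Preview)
Your proof is correct and follows essentially the same approach as the paper: both arguments reduce to coefficientwise convergence $A_i(m) \to A_i$ (which the paper has already recorded before stating the lemma, using Lemmas \ref{lemma-coefficients} and \ref{lemma-degree}) and then bound $|p_m(a)-p(a)| \leq \sum_i |A_i(m)-A_i|\,|a|^i$ by $M^d$ (or $R^d$) times the sum of coefficient errors. The paper's version is simply a terser rendering of exactly the estimate you wrote out.
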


\begin{Lemma} \label{lemma-rootsequence}
There exists a sequence $(i_m)$ such that $\lim_{m \rightarrow \infty} r_{i_m}(m) = s$.
\end{Lemma}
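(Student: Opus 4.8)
The plan is to deduce the existence of the root sequence $(i_m)$ from the uniform convergence $p_m \to p$ established in Lemma \ref{lemma-uniformconvergence}, together with the fact that $p(a) = A_1 a + A_0$ has the single simple root $s = -A_0/A_1$ (recall $A_1 \neq 0$). The underlying principle is a standard continuity-of-roots argument, but phrased so as only to require the information we actually have: uniform convergence on bounded sets, not uniform convergence of coefficients on all of $\mathbf{R}$ (which would be false, since the higher $A_i(m)$ tend to $0$ but the degree of $p_m$ stays $d$).

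First I would fix a small $\varepsilon > 0$ and consider the closed disc (or, working over $\mathbf{R}$, the interval) $\overline{B}(s,\varepsilon)$. On its boundary $p(a)$ is bounded away from zero: say $|p(a)| \geq \delta > 0$ for $|a - s| = \varepsilon$, where $\delta = |A_1|\varepsilon$. By Lemma \ref{lemma-uniformconvergence}, for all sufficiently large $m$ we have $|p_m(a) - p(a)| < \delta$ on this bounded set, so $p_m(a)$ also has no zero on $\partial \overline{B}(s,\varepsilon)$ and, since $p_m(a)$ and $p(a)$ are nonvanishing and close on that boundary, $p_m$ takes opposite signs at the two endpoints $s \pm \varepsilon$ (because $p = A_1 a + A_0$ does, and the perturbation is smaller than $\delta$). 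By the intermediate value theorem $p_m$ has a root in $(s - \varepsilon, s + \varepsilon)$; call it (a choice of) $r_{i_m}(m)$. Letting $\varepsilon$ run through a sequence tending to $0$ and taking $m$ large enough at each stage gives a choice of indices $i_m$ with $r_{i_m}(m) \to s$, which is exactly the claim. (If one prefers a complex-analytic phrasing, Rouché's theorem on $\partial \overline{B}(s,\varepsilon)$ gives the same conclusion and in fact shows the root in the disc is unique for large $m$, which is more than needed here.)

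The only subtle point, and the one I would be most careful about, is the interaction between the two kinds of roots: the $d_m - 1$ roots of $p_m$ other than the one near $s$ are expected to diverge (this is the content of the companion Lemma \ref{lemma-diverging}), so one must make sure the root being selected is genuinely the one tracking $s$ and not an artifact of indexing. But for the present lemma this is not actually an obstacle: we only need to exhibit \emph{some} sequence $(i_m)$ with $r_{i_m}(m) \to s$, and the localization argument above produces a root in every neighborhood of $s$ for $m$ large, so we may simply define $i_m$ to be the index of such a root (breaking ties arbitrarily). The genuine separation of this root from the diverging ones is deferred to Lemma \ref{lemma-diverging} and the arguments that follow; here the main step is just the elementary perturbation estimate on a fixed bounded set, which is exactly what Lemma \ref{lemma-uniformconvergence} is designed to supply.
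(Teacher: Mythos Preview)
Your proof is correct and follows essentially the same approach as the paper's: both use the uniform convergence of Lemma \ref{lemma-uniformconvergence} to show that $p_m$ takes opposite signs at $s\pm\varepsilon$ for $m$ large (since $p(a)=A_1(a-s)$ does and the perturbation is smaller than $|A_1|\varepsilon$), and then invoke the intermediate value theorem. The paper's version merely fixes a containing interval $[-M,M]$ with $M>2|s|$ before running the same sign argument, which is a cosmetic difference.
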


\begin{Lemma} \label{lemma-monotone}
If $(i_m)$ is as in Lemma \ref{lemma-rootsequence}, then either $r_{i_m}(m) \geq s$ for all sufficiently large $m$ or $r_{i_m}(m) \leq s$ for all sufficiently large $m$.
\end{Lemma}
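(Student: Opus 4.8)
The plan is to realize $r_{i_m}(m)$, for every sufficiently large $m$, as the value $\phi(1/m)$ of a single analytic function $\phi$ defined near $0$, and then to invoke the elementary fact that a nonzero analytic function vanishing at a point has a definite sign on each side of that point. Throughout I write $t = 1/m$.

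\textbf{Step 1 (polynomiality in $t$).} Since $A_i(m) = m^{-(2d-2)}\tilde A_i(m)$, the bounds $\deg_m \tilde A_i \le 2d-1-i$ for $i \ge 1$ (Lemma \ref{lemma-coefficients}) and $\deg_m \tilde A_0 \le 2d-2$ (first part of Lemma \ref{lemma-degree}) show that each $A_i(1/t)$ is a polynomial in $t$ taking the value $A_i$ at $t = 0$. Hence $q(a,t) := \sum_{i=0}^d A_i(1/t)\,a^i$ is a polynomial in the two variables $a$ and $t$, with $q(a,1/m) = p_m(a)$ for $m > 0$ and $q(a,0) = p(a) = A_1 a + A_0$.

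\textbf{Step 2 (an analytic branch of roots).} Apply the implicit function theorem to $q$ at $(s,0)$: we have $q(s,0) = p(s) = 0$, while $\partial q/\partial a\,(s,0) = A_1 \ne 0$ by the second part of Lemma \ref{lemma-degree}. Thus there are $\varepsilon, \eta > 0$ and an analytic function $\phi$ on the disc $\{|t| < \eta\}$ with $\phi(0) = s$, $q(\phi(t),t) \equiv 0$, and such that $\phi(t)$ is the \emph{only} zero of $q(\cdot, t)$ in the disc $\{|a - s| < \varepsilon\}$ for each such $t$; moreover $\phi$ is real-valued for real $t$ because $q$ has real coefficients. By Lemma \ref{lemma-rootsequence}, for all large $m$ we have both $1/m < \eta$ and $|r_{i_m}(m) - s| < \varepsilon$; since $r_{i_m}(m)$ is a zero of $q(\cdot, 1/m) = p_m$, the uniqueness clause forces $r_{i_m}(m) = \phi(1/m)$ (in particular, $r_{i_m}(m)$ is real for all large $m$).

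\textbf{Step 3 (sign of $\phi(t)-s$).} The function $\phi(t) - s$ is analytic near $0$ and vanishes at $t = 0$. If it vanishes identically, then $r_{i_m}(m) = s$ for all large $m$ and both alternatives in the lemma hold trivially. Otherwise $\phi(t) - s = c\,t^k(1 + o(1))$ as $t \to 0$ for some integer $k \ge 1$ and some $c \ne 0$; hence $\phi(t) - s$ has the constant sign of $c$ for all sufficiently small $t > 0$, i.e. $r_{i_m}(m) - s = \phi(1/m) - s$ has the constant sign of $c$ for all sufficiently large $m$. This is exactly the claimed dichotomy.

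The step I expect to demand the most care is Step 2 --- matching the root produced by the implicit function theorem with the specific member $r_{i_m}(m)$ of the fixed enumeration of roots supplied by Lemma \ref{lemma-rootsequence}. This matching depends essentially on the \emph{uniqueness} clause of the implicit function theorem (no other zero of $p_m$ near $s$, for large $m$), so that all the genuine content is pushed back into Lemmas \ref{lemma-coefficients} and \ref{lemma-degree}: the former, together with the bound $\deg_m \tilde A_0 \le 2d-2$, is precisely what makes $q$ analytic at $t = 0$, and the latter is what makes $A_1 = \partial q/\partial a\,(s,0)$ nonzero there.
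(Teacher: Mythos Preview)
Your proof is correct, but it follows a genuinely different route from the paper's. The paper stays with the discrete parameter $m$ and argues directly: since $m^{2d-2}p_m(s)$ is a polynomial in $m$, its sign is eventually constant (or it vanishes identically); and since $p_m'(a) \to A_1 \ne 0$ uniformly on bounded sets, $p_m$ is monotone on $(s-1,s+1)$ for large $m$. Combining the eventual sign of $p_m(s)$ with this monotonicity pins down on which side of $s$ the nearby root must lie, via a short case analysis on $\operatorname{sgn}(q(m))$ and $\operatorname{sgn}(A_1)$.

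Your approach instead reparametrizes by $t=1/m$, observes (from the degree bounds in Lemmas~\ref{lemma-coefficients} and~\ref{lemma-degree}) that the $A_i(1/t)$ are genuine polynomials in $t$, and then applies the implicit function theorem to the polynomial $q(a,t)$ at $(s,0)$. This packages the whole argument into the analyticity of a single branch $\phi(t)$, replacing the paper's case analysis by the standard fact that a real-analytic function has a definite sign on one side of an isolated zero. The IFT uniqueness clause also absorbs what would otherwise be a separate appeal to Lemma~\ref{lemma-diverging} in identifying $r_{i_m}(m)$ with $\phi(1/m)$. The trade-off: your argument is cleaner and avoids cases, but invokes the implicit function theorem; the paper's argument is more elementary and keeps the sign of $A_1$ visibly in play, which dovetails with how $A_1$ is used immediately afterward in the proof of Theorem~\ref{thm-dimd}.
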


\begin{Lemma} \label{lemma-diverging}
For any $M>0$, there exists $N>0$ such that for any $m>N$, there is at most one $j \in \{1,\ldots,d_m\}$ such that $r_j(m) \in [-M,M]$.
\end{Lemma}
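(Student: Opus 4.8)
The plan is to bound the roots of $p_m(a)$ away from any fixed interval $[-M,M]$ by playing off the estimates of Lemmas \ref{lemma-coefficients} and \ref{lemma-degree} against each other. Fix $M>0$. I would argue by contradiction: suppose there is a sequence $m_\ell \to \infty$ and, for each $\ell$, two distinct indices $j_\ell \neq j'_\ell$ with $r_{j_\ell}(m_\ell), r_{j'_\ell}(m_\ell) \in [-M,M]$. (If the conclusion of the lemma fails for some $M$, then for arbitrarily large $m$ there are at least two roots in $[-M,M]$, which gives such a sequence.) Write $p_m(a) = A_{d_m}(m)\prod_{i=1}^{d_m}(a - r_i(m))$, and separate out the two roots lying in $[-M,M]$:
\[
p_m(a) = A_{d_m}(m)\,(a-r_{j}(m))(a-r_{j'}(m)) \prod_{i \neq j,j'}(a - r_i(m))\text{.}
\]
The idea is that the quadratic factor $(a-r_j(m))(a-r_{j'}(m))$ contributes genuine $a^0$-, $a^1$-, and $a^2$-terms with \emph{bounded} coefficients (since $|r_j|,|r_{j'}| \le M$), whereas we know from the passage to $p_m$ that $A_0(m) \to A_0$ finite, $A_1(m) \to A_1 \neq 0$, and $A_i(m) \to 0$ for $i \ge 2$. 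So along the subsequence $m_\ell$, the normalized polynomial $p_{m_\ell}(a)$ converges (uniformly on $[-M,M]$, by Lemma \ref{lemma-uniformconvergence}) to $p(a) = A_1 a + A_0$, which has exactly one real root $s$, of multiplicity one. Since $|r_{j_\ell}(m_\ell)|,|r_{j'_\ell}(m_\ell)| \le M$, after passing to a further subsequence both converge, say to $\rho$ and $\rho'$ respectively; each is then a root of the limit $p(a)$, forcing $\rho = \rho' = s$.

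That $\rho=\rho'=s$ alone is not yet a contradiction, so the next step is to extract quantitative information: I would use the fact that $s$ is a \emph{simple} root of $p$. Concretely, Lemma \ref{lemma-uniformconvergence} plus the simplicity of $s$ gives, for large $m$, a bound of the form $|p_m(a)| \ge c\,|a-s|$ for $a$ in a fixed small neighborhood of $s$ and some $c>0$ independent of $m$ (argue via the uniform convergence of $p_m \to p$ and of $p_m' \to p' \equiv A_1$ on that neighborhood — the latter convergence follows because $A_i(m)\to 0$ for $i\ge 2$ and $A_1(m)\to A_1$, so the derivative's coefficients also converge). A polynomial with a factor $(a-r_j(m))(a-r_{j'}(m))$, both roots tending to $s$, would instead satisfy $|p_m(a)| = O(|a-s|^2)$ uniformly near $s$ once $m$ is large (the remaining factor $A_{d_m}(m)\prod_{i\neq j,j'}(a-r_i(m))$ is bounded on the neighborhood, since $p_m$ itself is uniformly bounded there and the quadratic factor is bounded below away from $s$ — or, more simply, compare the two expressions for $p_m$ and its derivative at $a=s$). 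Evaluating the lower bound $|p_m(a)| \ge c|a-s|$ and the upper bound $O(|a-s|^2)$ at a point $a$ at distance, say, $\min(c/(2K),\,\delta)$ from $s$ (where $K$ bounds the second-order behavior and $\delta$ is the radius of the neighborhood) yields $c \le c/2$, the desired contradiction.

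The main obstacle, and the step I would spend the most care on, is making the passage from ``both roots converge to $s$'' to a genuine contradiction fully rigorous — i.e. controlling the remaining factor $A_{d_m}(m)\prod_{i\neq j,j'}(a-r_i(m))$ uniformly in $m$ near $a=s$. The cleanest route is probably to avoid that product entirely: work directly with the normalized coefficients. Since $A_2(m) \to 0$, the elementary symmetric function $\sum_{i<i'} r_i(m) r_{i'}(m)$ divided by $A_{d_m}(m)/A_2(m)\cdots$ — this gets awkward because $A_{d_m}(m)$ itself may tend to $0$. So instead I would phrase everything in terms of the monic rescaling only on the relevant scale, or better, simply invoke the following soft fact: if monic-normalized polynomials $q_m$ converge uniformly on a neighborhood of $s$ to a polynomial $q$ having $s$ as a root of multiplicity exactly one, then for large $m$ the $q_m$ have at most one root in a fixed smaller neighborhood of $s$ (standard, via Rouch\'e or via the argument principle applied to $q_m$). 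Applying this to $q_m(a) = p_m(a)/A_1(m)$ (legitimate since $A_1(m) \to A_1 \neq 0$), which converges to $a - s + A_0/A_1 = (a-s)$ up to the constant, finishes the proof: at most one root of $p_m$ lies in that neighborhood of $s$, while Lemma \ref{lemma-uniformconvergence} shows any root in $[-M,M]$ must be close to $s$ for $m$ large. Choosing the neighborhood inside $[-M,M]$ and $N$ accordingly gives the statement.
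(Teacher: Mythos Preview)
Your proposal is correct, and the final route you settle on---uniform convergence of $p_m/A_1(m)$ to the linear polynomial $a-s$, followed by a Hurwitz/Rouch\'e argument to bound the number of roots near $s$---is sound. (One small point to make explicit: Rouch\'e needs uniform convergence on a complex disk, not just on a real interval; this follows immediately from the convergence of the coefficients $A_i(m)$, but Lemma~\ref{lemma-uniformconvergence} as stated is only over $\mathbf{R}$.)

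The paper's proof is different, and in fact closer to the approach you \emph{abandoned} as awkward. It factors $p_{m_k}(a) = (a-t_k)(a-u_k)q_k(a)$ with $q_k(a) = \sum_{i=0}^{d-2} B_i(k) a^i$, then equates coefficients of $a^i$ on both sides to get a triangular system relating the $A_i(m_k)$ to the $B_i(k)$ and the bounded quantities $t_k+u_k$, $t_k u_k$. Since $A_i(m)\to 0$ for $i\ge 2$, working from the top coefficient down forces $B_i(k)\to 0$ for every $i$; but then $A_1(m_k) = -(t_k+u_k)B_0(k) + t_k u_k B_1(k) \to 0$, contradicting $A_1\neq 0$. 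Your worry that ``$A_{d_m}(m)$ itself may tend to $0$'' is precisely what makes this work, not what breaks it: the recursion starts from $B_{d-2}(k)=A_d(m_k)\to 0$ and cascades down. So the coefficient-comparison route is entirely elementary (no complex analysis, no Rouch\'e), at the cost of writing out the recursion; your route imports a standard complex-analytic fact and is shorter once that fact is in hand.
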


\begin{proof}[Proof of Theorem \ref{thm-dimd}.]
Suppose $[K_X^d] [(K_X|_Y)^{d-1}Y^2] < 0$.  Then by the calculations in the proof of Lemma \ref{lemma-degree}, we have $A_1>0$.  Let $(i_m)$ be as in Lemma \ref{lemma-rootsequence}, and for each $m$ let $s_m = r_{i_m}(m)$.  Let 
\[
I = \{a \in \mathbf{R}: p_m(a) \leq 0, p_m(a+1) \geq 0 \text{ for all sufficiently large } m\} \text{.}
\]
$I$ is precisely the set which we would like to prove is a unit interval containing at least one of its endpoints.  (This is because for any $a \in \mathbf{R}$, $p_m(a)$ has the same sign as $e_Y(L+mK_X+aY)$, and $p_m(a+1)$ has the same sign as $e_Y(L+mK_X+(a+1)Y)$, which equals $-e_Z(L+mK_X+aY)$ by Proposition \ref{prop-identity}.)

From Lemma \ref{lemma-uniformconvergence}, Lemma \ref{lemma-diverging}, and the fact that $A_1>0$, we have that for every sufficiently large $m$, $p_m(a)$ has exactly one root in $(s-1,s+1)$, namely $s_m$; $p_m(a) < 0$ for every $a \in (s-1,s_m)$, and $p_m(a) > 0$ for every $a \in (s_m,s+1)$.  Since $\lim s_m = s$, this implies that for each $a \in (s-1,s)$, $p_m(a)<0$ and $p_m(a+1)>0$ for every sufficiently large $m$.

Furthermore, for each $a \notin [s-1,s]$, Lemma \ref{lemma-uniformconvergence} implies that for every sufficiently large $m$, $p_m(a)$ and $p_m(a+1)$ are either both strictly positive or both strictly negative, since the same is true for $p(a)$ and $p(a+1)$.

Therefore $(s-1,s) \subset I \subset [s-1,s]$.  It remains to check that $I$ contains at least one of its endpoints.  By Lemma \ref{lemma-monotone}, one of the following cases must occur:
	\begin{itemize}
	\item Case 1: $s_m \geq s$ for all sufficiently large $m$, and there are infinitely many $m$ such that $s_m \neq s$.  Then $I=(s-1,s]$.
	\item Case 2: $s_m \leq s$ for all sufficiently large $m$, and there are infinitely many $m$ such that $s_m \neq s$.  Then $I=[s-1,s)$.
	\item Case 3: $s_m = s$ for all sufficiently large $m$.  Then $I=[s-1,s]$.
	\end{itemize}

When $[K_X^d] [(K_X|_Y)^{d-1}Y^2] > 0$, the proof is identical except that in that case $A_1<0$.
\end{proof}

\begin{Remark} \label{remark-dimd-strictss}
This proof shows that the unit interval of Definition \ref{def-**} contains two integers (so we have `strict semistability') if and only if Case 3 occurs and $s \in \mathbf{Z}$.
\end{Remark}

\begin{Remark} \label{remark-surfaceasymptotic}
In Remark \ref{remark-curveasymptotic} we observed that for curves the line bundle $H$ plays no real role.  This is in contrast to the higher-dimensional case.  Suppose, in the situation of Theorem \ref{thm-dimd}, that $e_Y(L) \leq 0$ iff $e_Y(L+K_X) \leq 0$; that is, the sign of $p_m(a)$ is constant with respect to $m$.  Then, since the $p_m$ converge to the linear polynomial $p$, each $p_m$ must also be linear, with the same root as $p$; that is, $A_2(m), \ldots, A_d(m)$ are identically zero.

This is not always the case, however; for example, when $d=2$ and $H=K_X$, 
\[
\tilde{A}_2(m) = -[Y^3][K_X^2] m - [Y^3][LK_X]+2[Y^3]\chi(X,O_X) \text{,}
\]
and this cannot be identically zero unless $[Y^3] = 0$ or $[K_X^2] = 0$.
\end{Remark}

\vspace{.2in}

\subsection{Proofs of Lemmas \ref{lemma-coefficients}-\ref{lemma-diverging}} \label{subsection-lemmas}

\begin{proof}[Proof of Lemma \ref{lemma-coefficients}.]
Let 
\begin{align*}
q_X(L+mH) &= d! \chi(X,L+mH)-[(L+mH)^dX] \text{,}
\\
q_Y(L+mH+aY) &= d! \chi(Y,L+mH+aY)-[(L+mH+aY)^dY] \text{;}
\end{align*}
these are both polynomials of degree $d-1$ in their arguments.  By definition,
\begin{align*}
e_Y(L+mH+aY) 
&= [(L+mH)^dX] \cdot \frac{1}{d+1} \sum_{j=2}^{d+1} \binom{d+1}{j} (-1)^{j-1} [(L+mH+aY)^{d+1-j} Y^{j}] 
\\
&+ q_X(L+mH) \cdot \frac{1}{d+1} \sum_{j=1}^{d+1} \binom{d+1}{j} (-1)^{j-1} [(L+mH+aY)^{d+1-j} Y^{j}] 
\\
&- [(L+mH)^d X] \cdot q_Y(L+mH+aY) \text{.}
\end{align*}
In the first and third summands (resp. the second summand), the first factor has degree at most $d$ (resp. $d-1$) in $m$, while in the second factor the coefficient of $a^i$ has degree at most $d-1-i$ (resp. $d-i$) in $m$.
\end{proof} 

\begin{proof}[Proof of Lemma \ref{lemma-degree}.]
By the Hirzebruch-Riemann-Roch formula, for any line bundle $H$, we have
\begin{align*}
d! \chi(X,L+mH) &= [(L+mH)^dX] -\frac{d}{2}[(L+mH)^{d-1}K_X] + \epsilon_X(m) \text{,}
\\
d! \chi(Y,L+mH+aY) &= [(L+mH+aY)^dY] -\frac{d}{2}[(L+mH+aY)^{d-1}K_Y] + \epsilon_Y(m) \text{,}
\end{align*}
where $\epsilon_X(m)$ and $\epsilon_Y(m)$ are polynomials of degree at most $d-2$ in $m$.  
If we substitute these expressions into $e_Y(L+mH+aY)$, we see that in $\tilde{A}_1(m)$, the coefficient of $m^{2d-2}$ is 
\[
[H^dX]\left(-\frac{d}{2} (d-1) [H^{d-2}Y^3] + \frac{d}{2} (d-1) [H^{d-2}YK_Y] \right) 
-\frac{d}{2} [H^{d-1}K_X] \cdot d [H^{d-1}Y^2] \text{,}
\]
while in $\tilde{A}_0(m)$, the coefficient of $m^{2d-1}$ is
\[
[H^dX] \left(-\frac{d}{2} [H^{d-1}Y^2] + \frac{d}{2} [H^{d-1}K_Y] \right) 
-\frac{d}{2} [H^{d-1}K_X] [H^d Y] \text{.}
\]
If we take $H=K_X$ and simplify using the formula $K_X|_Y = K_Y-Y^2$, we find that the coefficient of $m^{2d-2}$ in $\tilde{A}_1(m)$ is
\[
-\frac{d}{2} [K_X^d] [(K_X|_Y)^{d-1}Y^2] \text{,}
\]
while the coefficient of $m^{2d-1}$ in $\tilde{A}_0(m)$ is $0$.
\end{proof}

\begin{proof}[Proof of Lemma \ref{lemma-uniformconvergence}.]
Fix any $M \geq 1$ and any $\epsilon > 0$.  Choose $N$ such that \[ |A_i(m)-A_i|<\frac{\epsilon}{(d+1)} M^{-d} \] for every $m > N$ and every $i$; then for any $a \in [-M,M]$, $|p_m(a)-p(a)| < \epsilon$.  Therefore $p_m(a)$ converges uniformly to $p(a)$ on $[-M,M]$.
\end{proof}

\begin{proof}[Proof of Lemma \ref{lemma-rootsequence}.]
Fix any $M>2|s|$ and any $\epsilon>0$.  Let $N$ be such that \[ |p_m(a)-p(a)|<\epsilon \cdot |A_1| \] for any $m>N$ and any $a \in [-M,M]$.  Then whenever $a \in [-M,M]$ and $m>N$, $p_m(a)$ has one sign (negative if $A_1>0$, positive if $A_1<0$) if $a-s \geq \epsilon$, and the opposite sign (positive if $A_1>0$, negative if $A_1<0$) if $s-a \geq \epsilon$.  Therefore $p_m$ has a root in $(s-\epsilon, s+\epsilon)$.
\end{proof}

\begin{proof}[Proof of Lemma \ref{lemma-monotone}.]
First note that either $p_m(s)>0$ for all sufficiently large $m$, $p_m(s)<0$ for all sufficiently large $m$, or $p_m(s)=0$ for all $m$.  This is because $p_m(s) = m^{2-2d} \cdot q(m)$ where $q(m)$ is a polynomial in $m$, and we must have either $\lim_{m \rightarrow \infty} q(m) = \pm \infty$ or $q(m)=0$ for all $m$.

Now observe that the derivative $p_m'(a)$ converges uniformly to $p'(a) = A_1$ on bounded subsets of $\mathbf{R}$.  In particular, for all sufficiently large $m$, we have \[ |p_m'(a)-A_1|<\frac12 |A_1| \] for every $a \in (s-1,s+1)$.

Therefore:
\begin{itemize}
\item If $\lim_{m \rightarrow \infty} q(m)=\infty$ and $A_1>0$, then for every sufficiently large $m$, $p_m(s)>0$ and $p_m'(a)>0$ for all $a \in (s-1,s+1)$, so $r_{i_m}(m)<s$.
\item If $\lim_{m \rightarrow \infty} q(m)=\infty$ and $A_1<0$, then for every sufficiently large $m$, $p_m(s)>0$ and $p_m'(a)<0$ for all $a \in (s-1,s+1)$, so $r_{i_m}(m)>s$.
\item If $\lim_{m \rightarrow \infty} q(m)=-\infty$ and $A_1>0$, then for every sufficiently large $m$, $p_m(s)<0$ and $p_m'(a)>0$ for all $a \in (s-1,s+1)$, so $r_{i_m}(m)>s$.
\item If $\lim_{m \rightarrow \infty} q(m)=-\infty$ and $A_1<0$, then for every sufficiently large $m$, $p_m(s)<0$ and $p_m'(a)<0$ for all $a \in (s-1,s+1)$, so $r_{i_m}(m)<s$.
\item If $q(m)=0$ for every $m$, then for every sufficiently large $m$, $r_{i_m}(m)=s$.
\end{itemize}

\end{proof}

\begin{proof}[Proof of Lemma \ref{lemma-diverging}.]
Suppose for the sake of contradiction that for each $k \in \mathbf{N}$ we can find $m_k>k$ such that there are distinct $j, j'$ such that $t_k := r_j(m_k)$ and $u_k := r_{j'}(m_k)$ are both in $[-M,M]$.  We may assume that $(m_k)$ is an increasing sequence.  For each $k$, we have 
	\[
	p_{m_k}(a) = (a-t_k)(a-u_k) q_k(a) \text{,}
	\]
	where
	\[
	q_k(a) = B_{d-2}(k)a^{d-2} + \ldots + B_1(k)a + B_0(k)
	\]
	for some polynomials $B_0(k), \ldots, B_{d-2}(k)$.  Expanding, and equating coefficients of $a$, we find that
	\begin{align*}
	A_d(m_k) &= B_{d-2}(k) \text{,}
	\\
	A_{d-1}(m_k) &= B_{d-3}(k) -(t_k+u_k)B_{d-2}(k) \text{,}
	\\
	A_i(m_k) &= B_{i-2}(k) - (t_k+u_k)B_{i-1}(k) + t_k u_k B_{i}(k) 
	\hspace{.1in} \text{ for each $i \in \{2,\ldots,d-2\}$,}
	\\
	A_1(m_k) &= -(t_k+u_k)B_0(k) + t_k u_k B_1(k) \text{,}
	\\
	A_0(m_k) &= t_k u_k B_0(k) \text{.}
	\end{align*}
	Since the sequences $(t_k)$ and $(u_k)$ are bounded, and since $\lim_{m \rightarrow \infty} A_i(m) = 0$ for every $i \geq 2$, starting from the top line and working our way down we deduce that $\lim_{k \rightarrow \infty} B_i(k) = 0$ for every $i$.  But this contradicts the fact that $\lim_{m \rightarrow \infty} A_1(m) \neq 0$.

\end{proof}

\end{document}